\def\alph{\tilde{\Sigma}}
\def\Id{\textsf{Id}}
\def\O{\mathcal{O}}
\def\calS{\mathcal{S}}
\def\U{\mathfrak{U}}
\def\tU{\tilde{\mathfrak{U}}}
\def\tH{\tilde{H}}
\def\tM{\mathfrak{M}}
\def\tP{\tilde{P}}
\def\C{\mathbb{C}}
\def\N{\mathbb{N}}
\def\Q{\mathbb{Q}}
\def\Z{\mathbb{Z}}
\let\ZZ = \Z
\def\GL{\textsf{GL}}
\def\M{\textsf{M}}
\def\DC{\textsf{DC}}
\def\EC{\textsf{EC}}
\def\WP{\textsf{WP}}
\def\S{\textsf{QuickWP}}
\def\False{\textsf{False}}
\def\True{\textsf{True}}
\let\epsilon = \varepsilon
\let\phi = \varphi
\def\lab{\textsf{lab}}
\def\mat{\textsf{M}}
\def\inv{^{-1}}
\theoremstyle{plain}
\newtheorem{theorem}{Theorem}
\newtheorem{lemma}[theorem]{Lemma}
\newtheorem{proposition}[theorem]{Proposition}
\newtheorem{corollary}[theorem]{Corollary}
\newtheorem*{unnumberedthm}{Main Theorem}
\theoremstyle{definition}
\newtheorem{definition}[theorem]{Definition}
\newtheorem{fact}[theorem]{Fact}
\newtheorem{remark}[theorem]{Remark}
\title{The average-case complexity of the Word Problem for groups of matrices over $\Z$ is linear}
\author{
    Fr\'ed\'erique Bassino, \small{\url{bassino@lipn.fr}}\\
    \small{Univ. Sorbonne Paris Nord, CNRS, LIPN, UMR 7030, F-93430 Villetaneuse, France}%
    \and
    Cyril Nicaud, \small{\url{cyril.nicaud@univ-eiffel.fr}}\\
    \small{Univ Gustave Eiffel, CNRS, LIGM, F-77454 Marne-la-Vallée, France}%
    \and
    Pascal Weil, \small{\url{pascal.weil@cnrs.fr}}\\
    \small{CNRS, ReLaX, IRL 2000, Siruseri, India}\\
    \small{CNRS, Univ. Sorbonne Paris Nord, LIPN, UMR 7030, F-93430 Villetaneuse, France}%
    }
\date{\today}
\begin{document}

\maketitle

\begin{abstract}
    We show that the Word Problem in finitely generated subgroups of $\GL_d(\Z)$ can be solved in linear average-case complexity. This is done under the bit-complexity model, which accounts for the fact that large integers are handled, and under the assumption that the input words are chosen uniformly at random among the words of a given length. Our result generalizes to matrices in $\GL_d(R)$, where $R$ is a subring of $\C$, of finite rank over $\Z$.
\end{abstract}

\section{Introduction}

Let $G$ be a group and let $\Sigma$ be a finite non-empty subset of $G$. The Word Problem for $G$ relative to $\Sigma$ is the following: given a finite sequence (a word) $w$ of elements of $\Sigma$ and their inverses, decide whether the value of $w$ in $G$ is trivial. This problem was introduced by Dehn in the early 20th century \cite{1911:Dehn}, and is considered one of the fundamental problems in algorithmic and combinatorial group theory. It is known that the Word Problem is not decidable in general (that is: there exist finitely generated, and even finitely presented groups with undecidable Word Problem), see Novikov \cite{1955:Novikov}
and Boone \cite{1959:Boone}. However, it is known to be decidable in many important classes of groups, for instance in automatic groups (see Epstein \emph{et al.} \cite{1992:Epsteinetal}),
which includes finite, free, hyperbolic or braid groups, in finitely presented residually finite groups (Simmons \cite{1973:Simmons}), in 1-relator groups (see Magnus \emph{et al.} \cite[Theorem 4.14]{1966:MagnusKS} and also Lyndon and Schupp \cite{1977:LyndonSchupp}), etc.

Here we consider this problem in the very natural context of subgroups of $\GL_d(\Z)$, the group of invertible matrices with integer coefficients. We set the following notation: $\Sigma$ is a finite non-empty subset of $\GL_d(\Z)$, $\alph = \Sigma \cup \Sigma^{-1}$, $H$ is the subgroup of $\GL_d(\Z)$ generated by $\Sigma$ and $\mat\colon \alph^* \to \GL_d(\Z)$ is the natural morphism, which maps the element $x \in \alph$ to the corresponding matrix in $\GL_d(\Z)$. That is, if $w$ is a word on alphabet $\alph$, then $\mat(w)$ is the value of $w$ in $\GL_d(\Z)$. The Word Problem in $H$ (relative to $\Sigma$), written $\WP_\Sigma$, is obviously decidable: $\mat(w)$ can be computed and compared to the identity matrix $\Id$.

In the following, we consider $d$ and $\Sigma$ to be fixed. The coefficients of the matrices in $H$ can be very large (the coefficients of $\mat(w)$ may grow exponentially in the length $|w|$ of $w$), and we therefore evaluate the complexity of algorithms in the so-called bit-complexity model: integers are identified with their binary expansion and arithmetic operations require more than constant time.  In particular, we use the recent and deep result of Harvey and van der Hoeven \cite{harvey2021integer} which states that multiplying two integers $p$ and $q$ can be done in time $\O(L\log L)$ where $L = \max(\log p, \log q)$ is (roughly) the maximal length of the binary expansions of $p$ and $q$.

The naive algorithm to solve the Word Problem in $H$ is the following: given $w = a_1\cdots a_n$, with each $a_i \in \alph$, let $w_0 = \Id$ (the identity matrix) and, for each $1\le i\le n$, $w_i = w_{i-1}a_i$. Then $\mat(w) = w_n$ and one can decide by inspection whether $\mat(w) = \Id$. This algorithm has a worst-case complexity in $\O(n^2\log n)$. A direct application of the classical divide-and-conquer strategy lowers this complexity to $\O(n\log^2 n)$, as noted already by Olshanskii and Shpilrain \cite[Proposition 2]{2025:OlshShp} (see \cref{prop: complexity DC} below).

We note that this divide-and-conquer technique yields an $\O(n)$ worst-case complexity when the matrices in $\Sigma$ are upper-triangular (\cref{prop: upper triangular}), a sharpening of \cite[Theorem 2]{2025:OlshShp}. As noted in that paper, this implies a linear worst-case complexity for the Word Problem in finitely generated nilpotent groups.

Our main result (see \cref{thm: main theorem} below for a more precise statement) is the following.

\begin{unnumberedthm}
    $\WP_\Sigma$ has linear time average-case complexity in the bitcost model of computation when inputs follow the uniform distribution on length $n$ words over alphabet $\alph$.
\end{unnumberedthm}

This result was already known in the case of polycyclic groups, which are representable as subgroups of $\GL_d(\Z)$ (see Wehrfritz \cite{1980:Wehrfritz}). Indeed, Olshanskii and Shpilrain established a linear average-case complexity of the Word Problem for these groups \cite[Theorem 3]{2025:OlshShp}, and in fact, for subgroups $H \le \GL_d(\Z)$ with a non-trivial virtually abelian factor, such as the virtually solvable linear groups \cite[Remark 4]{2025:OlshShp}, which goes beyond polycyclic groups.

In contrast, our result holds for all finitely generated subgroups of $\GL_d(\Z)$, polycyclic or not, and its proof is seemingly very different from that in \cite{2025:OlshShp}. The two proofs have however an interesting common point, see \cref{rem: compare quotienting proofs} below.

Several ideas come into play in the proof of our main result. The first is to use modulo computations: if $q$ is an integer, one first solves the Word Problem in the subgroup $H_q$ of $\GL_d(\Z/q\Z)$ generated by $\Sigma$. That can be done by using the standard (divide-and-conquer) algorithm in $\GL_d(\Z/q\Z)$: that is, by computing $\mat(w)_q$ (the matrix $\mat(w)$ modulo $q$) and verifying whether it is equal to $\Id$. Here we benefit from the fact that, at each step of the computation, the entries of matrices are in the interval $[0, q-1]$, thus lowering the worst-case complexity. If $q$ is constant, this is faster than computing $\mat(w)$ in $\GL_d(\Z)$ as the length of $w$ tends to infinity; and if $\mat(w)_q$ is not the identity, then neither is $\mat(w)$. Thus, if $\mat(w)_q \ne \Id$ with high probability, we first compute $\mat(w)_q$ and, if it is the identity, we compute $\mat(w)$ using the standard $\O(n\log^2 n)$ algorithm.

This however is not sufficient to get our result.
Indeed, as we will see, for any \emph{fixed} value of $q$, the probability that $\mat(w)_q = h$ (for any $h \in \GL_d(\Z/q\Z)$) may tend to a positive value, namely $\frac\alpha{|H_q|}$, where $\alpha = 1$ or $\alpha = 2$ (depending on $\Sigma$ and $q$). Thus, with probability tending to $\frac\alpha{|H_q|}$, we need to call the divide-and-conquer algorithm, yielding an $\O(n\log^2n)$ average-case complexity.

The next idea is to choose the modulus $q$ as a function $q(n)$ of the length $n$ of the input word $w$. The advantage of this algorithm is that, for each input word $w$, we compute $\mat(w)$ modulo a single integer $q(n)$. The challenge is to identify an appropriate function $q(n)$.

More precisely, we see the computation of $\mat(w)_{q(n)}$ in terms of trajectories in a Markov chain $\tM$, which is a technical variant of the natural Markov chain based on the Cayley graph of the subgroup $H_{q(n)}$ of $\GL_d(\Z/q(n)\Z)$ generated by $\Sigma$. We want $q(n)$ to be small enough so that $\mat(w)_{q(n)}$ is computed rapidly, and we want the Markov chain $\tM$ to have low mixing time (that is, we want it to converge rapidly towards its stationary distribution) and a large dispersion (so that $H_{q(n)}$ should have large cardinality and the probability that $\mat(w)_{q(n)} = \Id$ be small). These are seemingly contradictory requirements, and the technical work of the proof consists in identifying a function $q(n)$ with these properties.

Concretely, we choose $q(n)$ to be an increasingly long product of distinct primes, so that we test $\mat(w)$ modulo all these primes in a single and quick computation (namely, the modulo $q(n)$ computation), yet $q(n)$ grows sufficiently slowly. See \cref{def: def of q} for more details.

\begin{remark}\label{rem: compare quotienting proofs}
    The proof of \cite[Theorem 3]{2025:OlshShp} also uses computation in a quotient of $H$, though not a $\bmod$ $q$ quotient: the projection onto an abelian factor, whose existence is postulated (and which should be known for any implementation of their algorithm).
\end{remark}

It is important to note that our algorithm makes no assumption on $\Sigma$ (except the fact that $\Sigma \cap \Sigma\inv = \emptyset$) and on the properties of the subgroup $H$ it generates. The matrices in $\Sigma$ may be triangular or not; the subgroup $H$ may be finite (and it is well known that the Word Problem is simpler in this case) or infinite; it may be nilpotent, polycyclic or virtually solvable; it may have exponential or polynomial growth (as a consequence of the Tits alternative): we do not need to identify in which situation we are, and we always use the same algorithm.

The paper is organized as follows. In \cref{sec: preliminaries}, we lay out the notation and definitions of the Word Problem and its modulo variant and we give a very quick discussion of models of computation and of the definitions of worst-case and average-case complexity. In \cref{sec: computing with integers}, we remind the readers of the precise complexity of computing with integers, and in \cref{sec: markov chains}, we recall the fundamental definitions and results on Markov chains and their convergence to a stationary distribution, inasmuch as they will be needed in this paper.

In \cref{sec: algorithms}, we describe and analyze the standard divide-and-conquer algorithm to compute $\mat(w)$ (\cref{sec: first algorithms}), and we briefly discuss its application in the case of triangular matrices (\cref{sec: triangular}). Our Algorithm $\S$, which solves the Word Problem in linear average-case complexity, is given in \cref{sec: better algorithm}. This includes defining the function $q(n)$ mentioned above. We also reduce the proof of our Main Theorem to a technical statement (\cref{thm: technical theorem}), namely the fact that, if $H$ is infinite, then the probability that a word 
$w$ of length $n$ satisfies $\mat(w)_{q(n)} = \Id$ is $\O(\log^{-2}n)$. Finally, the proof of \cref{thm: technical theorem} is given in \cref{sec: non reduced words}.

The concluding remarks in \cref{sec: concluding} include a discussion of an alternative probabilistic algorithm to solve the Word Problem, which does not result in an improved asymptotic performance. We also explain there how our result extends to matrices over a subring $R$ of the complex field $\C$, provided that the additive group $(R,+)$ is finitely generated.

\section{Preliminaries and notation}\label{sec: preliminaries}

If $A$ is a matrix in $\GL_d(\Z)$, we let $\|A\|_\infty = \max\{|A_{i,j}| \mid 1\le i,j\le d\}$. If $m\ge 2$, $A_m$ denotes the projection of $A$ modulo $m$, a matrix with entries in $\Z/m\Z$.

If $X$ is a set of matrices in $\GL_d(\Z)$, the subgroup they generate is written $\langle X\rangle$. We also denote by $X_m$ the set $\{A_m \mid A\in X\}$.
It is interesting to note the following elementary fact.

\begin{fact}\label{rk: mod m, gl(Z) is a group}
    Let $m\ge 2$. If $m$ is not a prime, then $\Z/m\Z$ is not a field. However, the projection mod $m$ of the group $\GL_d(\Z)$ (or of any of its subgroups) is again a group.
\end{fact}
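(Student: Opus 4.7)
The plan is to show that the image of $\GL_d(\Z)$ under the reduction map $\pi_m : A \mapsto A_m$ is closed under the group operations inherited from matrix multiplication in $M_d(\Z/m\Z)$, and in fact sits inside $\GL_d(\Z/m\Z)$. The crucial observation is that $A \in \GL_d(\Z)$ forces $\det(A) \in \{-1, +1\}$, essentially because $A$ and $A^{-1}$ both have integer entries and their determinants multiply to $1$ in $\Z$.

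First I would note that $\pi_m$ extends to a ring homomorphism $M_d(\Z) \to M_d(\Z/m\Z)$, since reduction mod $m$ is compatible with addition and multiplication of entries; hence $\pi_m$ preserves matrix products and sends the identity matrix $\Id$ to $\Id$. In particular, if $A \in \GL_d(\Z)$ then $A_m (A^{-1})_m = (A A^{-1})_m = \Id$ in $M_d(\Z/m\Z)$, so $A_m$ is invertible with inverse $(A^{-1})_m$. Equivalently, $\det(A_m) = \det(A) \bmod m = \pm 1$, which is a unit in $\Z/m\Z$ regardless of whether $m$ is prime, so $A_m \in \GL_d(\Z/m\Z)$.

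From this, $\GL_d(\Z)_m \subseteq \GL_d(\Z/m\Z)$ and is closed under products (by multiplicativity of $\pi_m$), contains the identity, and is closed under inversion (since $(A_m)^{-1} = (A^{-1})_m$ lies in the image). Therefore $\GL_d(\Z)_m$ is a subgroup of $\GL_d(\Z/m\Z)$. For an arbitrary subgroup $H \le \GL_d(\Z)$, the same three closure properties hold verbatim for $H_m = \pi_m(H)$, so $H_m$ is a subgroup of $\GL_d(\Z/m\Z)$ as well.

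There is no real obstacle here; the only subtle point is to emphasize why the failure of $\Z/m\Z$ to be a field does not harm invertibility: it is not some ad hoc matrix in $M_d(\Z/m\Z)$ whose invertibility we need to establish, but rather a matrix that arises by reduction of an already-invertible integer matrix, whose determinant is a unit of $\Z/m\Z$ for the trivial reason that $\pm 1$ is a unit in every nonzero commutative ring.
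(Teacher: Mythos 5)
Your proof is correct. The paper states this as a Fact without proof, treating it as elementary; your argument is exactly the standard one implicit there: reduction mod $m$ is a ring homomorphism $M_d(\Z)\to M_d(\Z/m\Z)$, hence restricts to a group homomorphism $\GL_d(\Z)\to\GL_d(\Z/m\Z)$ (since $A_m(A^{-1})_m = (AA^{-1})_m = \Id$), and the image of a subgroup under a group homomorphism is a subgroup.
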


\subsection{The Word Problem and related algorithmic problems}\label{sec: WP and related pbms}

Let $\Sigma$ be a fixed, finite, non-empty set of matrices in $\GL_d(\Z)$, let $\alph = \Sigma \cup \Sigma^{-1}$ and let $\alph^*$ be the set of all words on alphabet $\alph$ (i.e., finite sequences of elements of $\alph$). We denote by $\mat\colon \alph^* \to \GL_d(\Z)$ the natural (monoid) morphism, which maps each element of $\Sigma$ to itself.

The \emph{Word Problem} $\WP_\Sigma$, is the following: given a word $w \in \alph$, decide whether $\mat(w)$ is the identity matrix $\Id$.

We also consider in this paper the closely related \emph{Exact Computation Problem} $\EC_\Sigma$: on input a word $w\in \alph^*$, $\EC_\Sigma$ computes the product $\mat(w)$ of that sequence in $\GL_d(\Z)$.
We will also discuss the same problems \emph{modulo $m$}, where $m\ge 2$ is an integer. More precisely, Problems $\WP_{\Sigma,m}$ (resp. $\EC_{\Sigma,m}$) takes a word $w\in  \alph$ as input, and decides whether $\mat(w)_m$ is the identity matrix (resp. computes $\mat(w)_m$).

\begin{remark}\label{rk: from EC to WP}
    If an algorithm $\calS$ solves $\EC_\Sigma$ (resp. $\EC_{\Sigma,m}$) on input $w$ --- that is, if we have computed $\mat(w)$ (resp. $\mat(w)_m$), --- then a minor tweak solves $\WP_\Sigma$ (resp. $\WP_{\Sigma,m}$): it suffices to examine the $d^2$ entries of $\mat(w)$ (resp. $\mat(w)_m$), which is done in constant time, under the reasonable assumption that the length of the binary representation of an integer can be compared to 1 in constant time.
\end{remark}

\paragraph{Convention}
Throughout the paper, the integer $d$ and the set $\Sigma$ are fixed. We let $k = |\Sigma|$. We also assume, and this is no loss of generality, that $\Sigma$ does not contain a matrix $A$ and its inverse. In particular, it does not contain the identity matrix $\Id$, and $|\alph| = 2|\Sigma|$.

\subsection{About algorithms and complexity}

To evaluate the complexity of an algorithm $\calS$, we consider the function $S(w)$, where $w$ is an input word, which measures the time (number of elementary operations) needed to run Algorithm $\calS$ on input $w$. Observe that, even though $\Sigma$ --- and thus the coefficients of the matrices in $\alph$ --- is fixed in our setting, the integers computed by the algorithms can be huge, and one cannot assume that arithmetic operations are performed in constant time. To account for this, we consider the \emph{bit-cost} model of computation, where an integer $n$ is encoded using roughly $\log n$ bits (see \cref{sec: computing with integers}).

The worst-case complexity of $\calS$ is the function on integers given by
$$S_{\textsf{wc}}(n) = \max\{S(w) \mid |w| = n\},$$
and its average complexity $S_{\textsf{ac}}(n)$ is the average value of $S(w)$, when $w$ runs uniformly over inputs of length $n$. As is traditional, these functions are considered up to asymptotic equivalence, when $n$ tends to infinity.

As mentioned earlier, our average-complexity results assume that input words are taken uniformly at random among words on $\alph$ of length $n$.

\subsection{Computing with integers}\label{sec: computing with integers}
The \emph{length} (or \emph{bit-size}) $\ell(n)$ of a integer $n$ is the length of its binary expansion, namely $\ell(n) = \lceil\log (|n|+1)\rceil + 1$ (all logarithms are in base 2), where the additional bit is used to encode the sign. As a result, the integers of length at most $\ell + 1$ have absolute value less than $2^\ell$. We freely use the following facts, all of them elementary --- with the exception of \cref{prop: basic facts}~\eqref{fact: HvdH}, which is a deep result due to Harvey and van der Hoeven \cite{harvey2021integer}.

\begin{proposition}\label{prop: basic facts}
Let $n,n'$ be integers and let $L$ such that $\ell(n), \ell(n') \le L$.
\begin{enumerate}[(i)]
\item $\ell(nn') \le \ell(n) + \ell(n') - 1$. 
\item\label{fact: HvdH} The product $nn'$ is computed in time $\O(L\log L)$.

\item The product $nn'$ is also computed (by the primary school multiplication algorithm) in time $\O(\ell(n)\ell(n'))$, which is interesting if $\ell(n)$ is small with respect to $\ell(n')$.

\item The sum of $d$ integers of length at most $L$, has length at most $1 + \log d + L$ and is computed in time $\O(d\log d + dL)$.

\item\label{fact: product by general matrix} If $A, A'$ are $d \times d$ matrices with entries of length at most $L$, the entries of the product $AA'$ have length at most $1 + \log d + 2L$, and each is computed in time $\O(d\log d + dL\log L)$.

\item\label{fact: product by fixed matrix} If the entries of $A$ (resp. $A'$) are of length at most $L$ (resp. $L'$), and if $L'$ is much smaller than $L$, the entries of the product $AA'$ have length at most $1 + \log d + L + L'$, and each is computed in time $\O(d\log d + dLL')$.
\end{enumerate}
\end{proposition}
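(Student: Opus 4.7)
All items but (ii) are elementary consequences of the definition $\ell(n) = \lceil \log(|n|+1)\rceil + 1$ together with careful bookkeeping of bit lengths; item (ii) is the deep result of Harvey and van der Hoeven, which I would quote as a black box. My plan is therefore to dispense with (ii) by citation, to prove (i), (iii) and (iv) directly, and to derive (v) and (vi) by combining the previous items.

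For (i), I would simply observe that $|nn'| < 2^{\ell(n)-1}\cdot 2^{\ell(n')-1}$, whence $\ell(nn')\le \ell(n)+\ell(n')-1$. For (iii), the schoolbook algorithm writes $nn'$ as the sum of $\ell(n')$ shifted copies of $n$ and runs in time $\O(\ell(n)\ell(n'))$. For (iv), a sum of $d$ integers of absolute value less than $2^{L-1}$ is bounded by $d\cdot 2^{L-1}$ in absolute value and hence fits in at most $1+\log d+L$ bits (including sign); by accumulating the summands one by one, the partial sum after $i$ steps has length at most $L+\O(\log i)$, so the $(i+1)$-th addition costs $\O(L+\log i)$, and summing over $i$ telescopes to $\O(dL + d\log d)$.

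For (v) and (vi), each entry of $AA'$ is a sum of $d$ products of entries of $A$ and $A'$. In case (v), each such product has length at most $2L-1$ by (i) and is computed in time $\O(L\log L)$ by (ii), so the $d$ pairwise products cost $\O(dL\log L)$ in total, and summing them via (iv) adds $\O(d\log d + dL)$, for a grand total of $\O(d\log d + dL\log L)$ per entry; the length bound $1+\log d+2L$ then drops out by combining (i) and (iv). Case (vi) is identical except that, because $L'$ is much smaller than $L$, one uses the schoolbook product (iii), which costs $\O(LL')$ per pairwise product and yields entries of length at most $L+L'-1$; combining with (iv) gives the stated length bound $1+\log d+L+L'$ and a time bound of $\O(dLL'+d\log d + d(L+L'))$.

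The only delicate point is cosmetic: tracking the various $\pm 1$ constants in the length formulas so the stated bounds come out exactly as written, and verifying in (vi) that the additive $d(L+L')$ from the summation stage is absorbed by $dLL'$ — which it is as soon as $L,L'\ge 1$, since then $L+L' \le 2LL'$. No genuine obstacle is expected anywhere; this proposition is essentially a calibration of the data used in the sequel.
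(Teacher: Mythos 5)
The paper gives no proof here---items (i), (iii)--(vi) are asserted as elementary bit-length bookkeeping and (ii) is cited to Harvey--van der Hoeven---so there is no argument of the paper's to diverge from. Your write-up supplies that bookkeeping correctly and by the obvious route the authors intend (deriving (i) from $|nn'|<2^{\ell(n)-1}\cdot 2^{\ell(n')-1}$, schoolbook for (iii), sequential accumulation for (iv), and composition of (i)--(iv) for the matrix items (v)--(vi), including the observation that the additive $d(L+L')$ from the summation stage is absorbed into $dLL'$).
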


\begin{remark} \label{rk:coef growth}
\cref{prop: basic facts}~\eqref{fact: product by fixed matrix} shows that, if $w$ is a length $n$ word on $\alph$, the entries of $\mat(w)$ have length at most $(1+L+\log d)\,n$, where $L$ is the maximum length of the coefficients of the matrices in $\alph$: the length of the entries of $\mat(w)$ grows at most linearly in $n$, and their value in $\Z$ at most exponentially.
\end{remark}

In the sequel, we will also compute in $\Z/m\Z$ for some integer $m\ge 2$. We record the following result on the complexity of computing in this ring, where every element is represented by a non-negative integer at most equal to $m$. It follows directly from \cite[Theorem 9.8 and Corollary 9.9]{vzGathenGerhard}, together with Proposition~\ref{prop: basic facts}~\eqref{fact: HvdH} due to \cite{harvey2021integer}.

\begin{corollary}\label{cor: arithmetic mod p}
    Let $m\ge 2$. Every arithmetic operation in $\Z/m\Z$ is performed in $\O(\log m\, \log\log m)$.
\end{corollary}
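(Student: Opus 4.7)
The plan is to check the three ring operations of $\Z/m\Z$ (addition, subtraction, multiplication) one by one. By our convention, every element of $\Z/m\Z$ is represented by a non-negative integer in $[0,m)$, hence of bit-length $L = \O(\log m)$. The target bound $\O(\log m \log\log m)$ is the cost of multiplying two integers of bit-length $L$ by Harvey--van der Hoeven (\cref{prop: basic facts}~\eqref{fact: HvdH}), so addition and subtraction will be essentially free and the only nontrivial point is multiplication followed by reduction modulo $m$.

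For addition and subtraction of two canonical representatives $a, b$, I would compute $a \pm b$ in $\Z$ in time $\O(L)$; the result lies in the interval $(-m, 2m)$ and hence is brought back into $[0,m)$ by at most one addition or subtraction of $m$, another $\O(L)$ operation. So $+$ and $-$ cost $\O(\log m)$, well within the target.

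For multiplication, I would first form the product $ab$ in $\Z$, which is a non-negative integer of bit-length at most $2L$, in time $\O(L \log L) = \O(\log m \log\log m)$ by \cref{prop: basic facts}~\eqref{fact: HvdH}. It then remains to reduce $ab$ modulo $m$, i.e.\ to compute the Euclidean remainder of a $2L$-bit dividend by the $L$-bit divisor $m$. This is where \cite[Theorem~9.8 and Corollary~9.9]{vzGathenGerhard} are invoked: the standard reduction from fast Euclidean division to fast multiplication (computing an approximate reciprocal of $m$ by Newton iteration, then recovering the quotient and remainder with a bounded number of multiplications of integers of bit-length $\O(L)$) yields a division cost of $\O(L \log L)$ as soon as integer multiplication itself runs in $\O(L \log L)$. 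Feeding Harvey--van der Hoeven into this reduction yields the claimed $\O(\log m \log\log m)$ bound for the whole multiplication-and-reduce operation, and the corollary follows.

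The only step that is not entirely elementary is the modular reduction after multiplication; but, as just indicated, it is handled by the textbook reduction of fast division to fast multiplication and is precisely the content of the two results cited from \cite{vzGathenGerhard}, so there is no genuine obstacle to overcome beyond quoting them.
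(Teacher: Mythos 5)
Your proposal is correct and takes essentially the same route as the paper: the paper simply cites \cite[Theorem~9.8 and Corollary~9.9]{vzGathenGerhard} together with \cref{prop: basic facts}~\eqref{fact: HvdH}, and you have unpacked exactly what that citation amounts to (fast integer multiplication plus fast Euclidean reduction via Newton iteration, both costing $\O(\log m\,\log\log m)$).
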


\subsection{Basic results on probability distributions and Markov chains}\label{sec: markov chains}

For a general discussion of probability distributions and Markov chains, we refer readers to \cite{LevinPeres}. Here we fix some notation and state a few standard results on Markov chains, that will be used in the sequel.

If $X$ is a set, a \emph{probability distribution} (or \emph{probability vector}) on $X$ is a vector $(\mu(x))_{x\in X}$, where each $\mu(x)$ lies in the closed interval $[0,1]$ and $\sum_x\mu(x) = 1$. The \emph{corresponding probability function} is given, for every subset $Y$ of $X$, by $\mu(Y) = \sum_{y\in Y}\mu(y)$. The \emph{uniform distribution} on $X$ is the vector all of whose entries are $\frac1{|X|}$.

If $\mu$ and $\pi$ are two probability distributions on the same finite set $X$, their \emph{total variation distance} is 
\begin{equation}\label{eq:var}
\|\pi - \mu\|_{\text{Var}} = \sup_{A\subseteq X}\,|\pi(A) - \mu(A)| = \frac12\sum_{x\in X}|\pi(x)-\mu(x)|.
\end{equation}
The second equality is proven in \cite[Proposition 4.2]{LevinPeres}.

In this paper, a (finite) \emph{Markov chain} $\tM$ consists in a directed graph with (finite) vertex set $S$ (vertices are also called \emph{states}), edge set a subset of $S\times S$, and, for each edge (called a \emph{transition}) from state $q$ to state $q'$, of a value $M_{q,q'} \in [0,1]$, in such a way that, for each $q\in S$, $(M_{q,q'})_{q'\in S}$ is a probability vector. The matrix $M = (M_{q,q'})_{(q,q') \in S \times S}$ is called the \emph{transition matrix} of $\tM$.

A path $T = (q_0,\dots,q_n)$ in the underlying graph of $\tM$ is called a \emph{trajectory}. A probability is assigned to $T$ by $\tM$, namely the product $M_{q_0,q_1}\,M_{q_1,q_2}\dots\,M_{q_{n-1},q_n}$. We note that the $(q,q')$-entry of the $n$-th power of the transition matrix $M$ is the sum of the probabilities of the length $n$ trajectories from $q$ to $q'$.

The Markov chain $\tM$ is \emph{symmetric} if its transition matrix is symmetric, that is, if $M_{q,q'} = M_{q',q}$ for all states $q,q'$. The chain $\tM$ is \emph{irreducible} if its underlying graph is strongly connected, that is: for every $q, q'\in S$, the $(q,q')$-entry of some positive power of $M$ is non-zero. The chain $\tM$ is said to be \emph{aperiodic} if, for every $q \in S$ and for all $n$ large enough, there exists a length $n$ trajectory from $q$ to $q$. Finally, the chain $\tM$ is \emph{primitive} if it is both irreducible and aperiodic.

A probability distribution $D$ on $X$ is called \emph{stationary} if $D\,M = D$, that is, if $D$ is a left eigenvector for the eigenvalue 1.

A Markov chain $\tM$ with a stationary distribution $\pi$ is said to be \emph{reversible with respect to $\pi$} (or just \emph{reversible} if the stationary distribution is unique) if $\pi(q)\,M(q,q') = \pi(q')\,M(q',q)$ for all states $q,q'$.

The following is a classical result on Markov chains.

\begin{theorem}\label{thm: basics on Markov}
    Let $\tM$ be a primitive Markov chain. Then 1 is an eigenvalue of $M$ and the other eigenvalues have modulus less than 1. The eigenspace corresponding to eigenvalue 1 has dimension 1 and $\tM$ has a unique stationary distribution $\pi$, which satisfies the following: if $\mu$ is any probability distribution on the state set of $\tM$, then $\lim_n \mu\,M^n = \pi$. 

    If $\tM$ is primitive and symmetric, then the uniform distribution is its unique stationary distribution, all the eigenvalues are real, and $\tM$ is reversible.
\end{theorem}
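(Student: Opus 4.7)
The statement is essentially the Perron--Frobenius theorem specialised to stochastic matrices, so the plan is to derive it from general properties of $M$ as a non-negative stochastic matrix together with primitivity. I would proceed in four steps.

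\textbf{Step 1 (1 is an eigenvalue, all eigenvalues have modulus $\le 1$).} Since each row of $M$ is a probability vector, the all-ones column vector $\mathbf{1}$ satisfies $M\mathbf{1} = \mathbf{1}$, so $1$ is an eigenvalue. If $\mathbf{v}$ is any (complex) eigenvector with eigenvalue $\lambda$ and we pick a coordinate $q$ maximising $|v_q|$, then
\[
|\lambda|\,|v_q| = \Big|\sum_{q'} M_{q,q'} v_{q'}\Big| \le \sum_{q'} M_{q,q'}\,|v_{q'}| \le |v_q|,
\]
so $|\lambda| \le 1$.

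\textbf{Step 2 (strict inequality and simplicity of $1$, the main obstacle).} This is the heart of the argument. The first thing to note is that primitivity (irreducibility plus aperiodicity, as defined in the paper) implies the existence of an integer $N$ such that all entries of $M^N$ are strictly positive: aperiodicity gives, for each $q$, a threshold beyond which $M^n_{q,q} > 0$; irreducibility then lets us propagate this to every pair of states. Now I apply Perron--Frobenius for strictly positive matrices to $M^N$: its spectral radius is a simple eigenvalue, equal to $1$, and every other eigenvalue of $M^N$ has modulus strictly less than $1$. Pulling this back to $M$, the eigenvalue $1$ of $M$ is simple (since the $1$-eigenspaces of $M$ and $M^N$ coincide), and any other eigenvalue $\lambda$ of $M$ satisfies $|\lambda^N|<1$, hence $|\lambda|<1$. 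Simplicity of the eigenvalue $1$ for the transpose $M^T$ now yields a unique (up to scaling) left eigenvector; normalising so that entries sum to $1$ gives a probability vector $\pi$ with $\pi M = \pi$. Positivity of $\pi$ also follows from Perron--Frobenius.

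\textbf{Step 3 (convergence $\mu M^n \to \pi$).} Using the Jordan normal form of $M$, decompose $M = \mathbf{1}\pi + R$, where $R$ has spectral radius strictly less than $1$ (its spectrum is the spectrum of $M$ with the eigenvalue $1$ removed, by Step 2). One checks that $\mathbf{1}\pi$ and $R$ annihilate one another, so $M^n = \mathbf{1}\pi + R^n$. For any probability vector $\mu$, $\mu\mathbf{1} = 1$, hence $\mu M^n = \pi + \mu R^n \to \pi$ since $R^n \to 0$.

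\textbf{Step 4 (the symmetric case).} If $M = M^T$, then $M$ is a real symmetric matrix, so it is diagonalisable over $\R$ and all its eigenvalues are real. Symmetry also means that column sums equal row sums, which are all $1$; hence the uniform vector $U = (1/|S|)_{q\in S}$ satisfies $UM = U$, and by uniqueness from Step 2 it is the stationary distribution. Reversibility is then immediate: $U(q)M_{q,q'} = \frac{1}{|S|}M_{q,q'} = \frac{1}{|S|}M_{q',q} = U(q')M_{q',q}$.

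The principal technical point is Step 2, where primitivity has to be converted into a statement about the spectrum; everything else is bookkeeping around standard linear algebra. Since the result is completely classical, in practice I would simply cite \cite{LevinPeres} for it, but the sketch above is the proof I would write out if pushed.
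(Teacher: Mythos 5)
The paper offers no proof of this statement: it is presented as a ``classical result on Markov chains'' with the reader referred to \cite{LevinPeres}, so there is no internal argument to compare yours against. Your sketch is the standard Perron--Frobenius route and is essentially correct; the passage from primitivity (in the paper's sense, irreducible and aperiodic) to the existence of $N$ with $M^N>0$ entrywise, the rank-one splitting $M=\mathbf{1}\pi+R$ with $(\mathbf{1}\pi)R=R(\mathbf{1}\pi)=0$ and $M^n=\mathbf{1}\pi+R^n$, and the symmetric case are all sound.

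One small wrinkle in Step~2 deserves to be spelled out. You write that any eigenvalue $\lambda\neq 1$ of $M$ satisfies $|\lambda^N|<1$ and hence $|\lambda|<1$, but this silently excludes the possibility $\lambda^N=1$ with $\lambda\neq 1$ (an $N$-th root of unity on the unit circle). That case has to be ruled out separately: if $Mv=\lambda v$ with $\lambda^N=1$, then $M^N v=v$, so $v$ lies in the $1$-eigenspace of $M^N$, which by Perron--Frobenius for the positive matrix $M^N$ is exactly $\mathrm{span}\{\mathbf{1}\}$; thus $v$ is a multiple of $\mathbf{1}$, forcing $\lambda=1$. Similarly, your parenthetical ``since the $1$-eigenspaces of $M$ and $M^N$ coincide'' is stated as the \emph{reason} for simplicity but really runs the other way: simplicity of $1$ for $M^N$ (Perron--Frobenius), together with $\mathbf{1}$ being a $1$-eigenvector for both, is what forces the coincidence. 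Finally, for the convergence in Step~3 you need the \emph{algebraic}, not merely geometric, simplicity of the eigenvalue $1$ of $M$ to guarantee $\rho(R)<1$; this also follows by pulling back algebraic simplicity from $M^N$ (if $1$ had algebraic multiplicity $\geq 2$ for $M$, the same would hold for $M^N$). With these points made explicit, the proof is complete.
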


\section{Algorithms for $\WP_\Sigma$ and $\EC_\Sigma$}\label{sec: algorithms}

With the aim of studying the average-case complexity of $\WP_\Sigma$, we consider several algorithms solving this problem. We start with standard algorithms, including one with $\O(n\log^2n)$ worst-case complexity (\cref{sec: first algorithms}). In \cref{sec: better algorithm}, we introduce a better algorithm with the announced linear average-case complexity for  the uniform distribution on words of length $n$.
We then state our main theorem, \cref{thm: main theorem}, and reduce its proof to a technical statement (\cref{thm: technical theorem}).
The proof of that statement is given in \cref{sec: non reduced words}.

\subsection{First algorithms}\label{sec: first algorithms}

The naive algorithm to compute $\mat(w)$ consists in reading the word $w$ from left to right, one letter at a time, and performing the corresponding $n-1$ matrix multiplications --- where $n = |w|$. The right factor in each of these operations is a matrix in $\alph$, a constant set which is independent of $n$. A direct application of \cref{prop: basic facts}~\eqref{fact: product by fixed matrix} and \cref{rk:coef growth} then shows that the worst-case bit complexity of this algorithm is $\O(\ell^2n^2)$, where $\ell$ is an upper bound of the bit-size of the coefficients of the elements of $\alph$. Since $\ell$ is fixed in our settings, this naive approach runs in $\O(n^2)$ time.

This quadratic upper bound can be significantly improved using a \emph{divide and conquer} strategy.

\medskip

\begin{algorithm}[H]
\caption{\bf Algorithm $\DC_\Sigma$}
\DontPrintSemicolon
\SetKwInOut{Input}{Input}
\SetKwInOut{Output}{Output}

\Input{a sequence $w$ of $n$ elements of $\alph$}
\Output{$\mat(w)$}

\lIf{$n=0$ (resp. $n=1$)}{\Return $\Id$ (resp. $\mat(w)$)}
$w_1\gets$ prefix of $w$ of length $\lfloor n/2\rfloor$\;
$w_2\gets$ suffix of $w$ of length $\lceil n/2\rceil$\;
\Return $\DC_\Sigma(w_1)\times\DC_\Sigma(w_2)$
\end{algorithm}

\medskip

Before we analyze the complexity of this simple algorithm, let us remind the reader of an instance of the celebrated \emph{Master Theorem} (see, \textit{e.g.}, \cite[Theorem 4.1]{cormen2022introduction}), which we will use several times.

\begin{proposition}\label{prop: master theorem}
    Let $C(n)$ and $f(n)$ be positive-valued non-decreasing functions on $\N$ satisfying the equation $C(n) = C(\lfloor \frac n2\rfloor) + C(\lceil \frac n2\rceil) + f(n)$ for $n\geq 2$.
    \begin{itemize}
        \item If $f(n) = \O(n^h)$ for some $0 \leq h < 1$, then $C(n) = \O(n)$.
        \item If $f(n) = \O(n\,\log^hn)$ for some $h \geq 0$, then $C(n) = \O(n\,\log^{h+1}n)$.
    \end{itemize}
\end{proposition}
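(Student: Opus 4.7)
The plan is to unroll the divide-and-conquer recurrence level by level and bound the contribution of each level separately. The first step is to reduce to the case where $n$ is a power of $2$, which kills the floor/ceiling asymmetry. Since $C$ is non-decreasing, we have $C(n)\le C(N)$ where $N=2^{\lceil\log n\rceil}$ is the smallest power of $2$ with $N\ge n$; note that $N\le 2n$, so any bound $C(N)=O(g(N))$ with $g(n)\in\{n,\ n\log^{h+1}n\}$ transfers to $C(n)=O(g(n))$ because $g(2n)=O(g(n))$ in both cases.

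When $N$ is a power of $2$, both recursive calls are of size $N/2$, so the recurrence collapses to $C(N)=2C(N/2)+f(N)$. Iterating this identity and using $C(1)=O(1)$ yields
\[ C(N) \;=\; N\,C(1) \;+\; \sum_{j=0}^{\log N-1} 2^j\,f(N/2^j). \]
Only the sum matters; the ``boundary'' term $N\,C(1)$ is already $O(N)$.

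In the first case ($f(n)=O(n^h)$ with $0\le h<1$), each level-$j$ term is $O(2^j(N/2^j)^h)=O(2^{j(1-h)}N^h)$, forming a geometric series of ratio $2^{1-h}>1$. It is therefore dominated by its last term, which is of order $2^{(\log N)(1-h)}N^h=N$, giving $C(N)=O(N)$. In the second case ($f(n)=O(n\log^h n)$), each term satisfies $2^j f(N/2^j)=O(N\log^h(N/2^j))\le O(N\log^h N)$; summing the $\log N$ terms yields the expected $O(N\log^{h+1}N)$. Combined with the monotonicity reduction, this proves both bounds.

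The only potentially delicate point is that the floor/ceiling breaks the clean doubling structure at every level of the recursion. Doing a direct induction on $n$ with $C(\lfloor n/2\rfloor)+C(\lceil n/2\rceil)$ requires tracking a lower-order slack term in the induction hypothesis, which is annoying but doable; the monotonicity trick above sidesteps this entirely, which is why I would structure the proof that way. Everything else reduces to elementary geometric-series estimates.
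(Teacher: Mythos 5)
The paper does not actually prove this proposition: it invokes it as a known instance of the Master Theorem, citing \cite[Theorem~4.1]{cormen2022introduction}, so there is no in-paper argument to compare against. Your blind proof is a correct, self-contained derivation via the standard unrolling argument, and it is the right way to fill the gap.

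A few remarks on the details. The monotonicity reduction to $N=2^{\lceil\log n\rceil}$ is clean and correct: $n\le N\le 2n$, both target functions $g(n)=n$ and $g(n)=n\log^{h+1}n$ satisfy $g(2n)=\O(g(n))$, and since $N$ is a power of $2$ the recurrence collapses to $C(N)=2C(N/2)+f(N)$, which unrolls to $C(N)=N\,C(1)+\sum_{j=0}^{\log N-1}2^{j}f(N/2^{j})$. Your geometric-series estimate in the first case (ratio $2^{1-h}>1$, dominated by the top term of order $N$) and your termwise bound $2^{j}f(N/2^{j})=\O(N\log^{h}N)$ in the second case are both correct; note that the summation range $j\le\log N-1$ keeps $N/2^{j}\ge 2$, so $\log(N/2^{j})\ge 1$ and there is no degenerate $\log 1=0$ term. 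One minor point you leave implicit is that $f(n)=\O(\cdot)$ is only an asymptotic bound: for the finitely many levels where $N/2^{j}$ is below the threshold $n_{0}$ of the big-O, you should bound $f$ by a constant; these levels contribute $\O(\sum_{j}2^{j})=\O(N)$, which is absorbed. With that small caveat spelled out, the argument is complete.
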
    

\begin{proposition}\label{prop: complexity DC}
Algorithm $\DC_\Sigma$ solves Problem $\EC_\Sigma$, with worst-case complexity $\O(n\log^2n)$. In addition, Problem $\WP_\Sigma$ can be solved with the same worst-case complexity. 

Moreover, if $H = \langle\Sigma\rangle$ is finite, then the complexity of $\DC_\Sigma$ is linear.
\end{proposition}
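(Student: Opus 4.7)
The plan is to verify correctness by a one-line induction and then analyze the cost of the recursion tree. Correctness of $\DC_\Sigma$ is immediate: by associativity of matrix multiplication, $\mat(w) = \mat(w_1)\mat(w_2)$ whenever $w = w_1 w_2$, so an easy induction on $n = |w|$ shows the algorithm returns $\mat(w)$. The work done outside the two recursive calls is dominated by a single $d\times d$ matrix multiplication; if we let $C(n)$ denote the worst-case bit cost of $\DC_\Sigma$ on an input of length $n$ and $M(n)$ the cost of that multiplication, we get the recurrence
\[
C(n) \;=\; C\bigl(\lfloor n/2\rfloor\bigr) + C\bigl(\lceil n/2\rceil\bigr) + M(n) + \O(1).
\]

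To bound $M(n)$, I would invoke \cref{rk:coef growth}: the entries of $\mat(u)$ for any word $u$ of length at most $n$ have bit-size $\O(n)$. Both factors returned by the recursive calls therefore have entries of length $\O(n)$, and \cref{prop: basic facts}\eqref{fact: product by general matrix} (with $L = \O(n)$ and $d$ fixed) gives $M(n) = \O(n \log n)$ for each of the $d^2 = \O(1)$ entries, hence $M(n) = \O(n \log n)$ overall. Plugging $f(n) = n\log n$ into the second clause of the Master Theorem (\cref{prop: master theorem}) with $h=1$ yields $C(n) = \O(n \log^2 n)$, which is the announced worst-case bound for $\EC_\Sigma$. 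The same bound for $\WP_\Sigma$ follows from \cref{rk: from EC to WP}: after computing $\mat(w)$, comparing its $d^2$ entries to those of $\Id$ costs $\O(n)$, which is absorbed in $\O(n\log^2 n)$.

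For the finite case, the key observation is that if $H = \langle\Sigma\rangle$ is finite, then $\|\mat(u)\|_\infty$ is uniformly bounded by $L_0 = \max_{h\in H}\|h\|_\infty$, a constant independent of $|u|$. Thus at every node of the recursion tree, the two matrices being multiplied have entries of constant bit-size, so $M(n) = \O(1)$ and the recurrence becomes $C(n) = C(\lfloor n/2\rfloor) + C(\lceil n/2\rceil) + \O(1)$. The first clause of \cref{prop: master theorem} (with $h = 0$) then gives $C(n) = \O(n)$, completing the proof. The only point that requires any care is ensuring that the $\O(n)$ bound on entry lengths from \cref{rk:coef growth} is genuinely tight enough to keep $M(n) = \O(n\log n)$ rather than something larger; once that is in hand, everything reduces to a clean application of the Master Theorem.
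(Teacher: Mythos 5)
Your proof is correct and follows essentially the same route as the paper: correctness from associativity, entry-size bound from \cref{rk:coef growth}, matrix-multiplication cost $\O(n\log n)$ via \cref{prop: basic facts}~\eqref{fact: HvdH}, and the Master Theorem (\cref{prop: master theorem}) applied to the resulting recurrence, with the finite case handled by observing that entries stay of constant length. The extra worry at the end about whether the $\O(n)$ entry-length bound is ``tight enough'' is unnecessary --- \cref{rk:coef growth} already supplies exactly what is needed.
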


\begin{proof}
Since $w = w_1w_2$, we have $\mat(w) = \mat(w_1)\,\mat(w_2)$, so Algorithm $\DC_\Sigma$ solves Problem $\EC_\Sigma$. 

\cref{prop: basic facts}~\eqref{fact: HvdH} (crucially using~\cite{harvey2021integer}) and Remark~\ref{rk:coef growth} show that the complexity $C(n)$ of this algorithm satisfies the equation

\begin{equation}
    C(n) = C\left(\lfloor n/2\rfloor\right) + C\left(\lceil n/2\rceil\right) + \O(n\log n) \qquad\text{for } n\geq 2.
\end{equation}
\cref{prop: master theorem} then yields the fact that $C(n)$ is $\O(n\log^2n)$. The statement on Problem $\WP_\Sigma$ follows from \cref{rk: from EC to WP}.

If $H$ is finite, then the coefficients of the matrices in $H$ have bounded length and the complexity $C(n)$ now satisfies the equation
\begin{equation}
    C(n) = C\left(\lfloor n/2\rfloor\right) + C\left(\lceil n/2\rceil\right) + \O(1) \qquad\text{for } n\geq 2.
\end{equation}
\cref{prop: master theorem} then yields the fact that $C(n)$ is $\O(n)$.
\end{proof}

The same algorithm can be run on matrices in $\Z/m\Z$, where $m \ge 2$ is any integer. Let $\DC_{\Sigma,m}$ be the algorithm with the same steps as Algorithm $\DC_\Sigma$, where all arithmetic operations are performed in $\Z/m\Z$ instead of $\Z$. It is immediate that Algorithm $\DC_{\Sigma,m}$ solves Problem $\EC_{\Sigma,m}$, and hence also Problem $\WP_{\Sigma,m}$ in linear time (when $m$ is fixed).

\begin{remark}\label{rk: worst case in other rings}
    In fact, the same algorithm runs on matrices over any computable ring. Over $\Q$, it also yields an $\O(n\log^2n)$ worst-case complexity since the addition and multiplication of rationals takes asymptotically the same time as the addition and multiplication of integers.
\end{remark}

\subsection{The special case of triangular matrices}\label{sec: triangular}

We note that if $A$ and $A'$ are upper triangular matrices and if $1\le i \le j \le d$, then the $(i,j)$-entry of $AA'$ is $\sum_{h=i}^jA_{i,h}\,A'_{h,j}$. 

Now suppose that the matrices in $\Sigma$ are upper-triangular, and hence so are their inverses. It is directly verified that, if $w \in \alph^*$ has length $n$ and $1\le i \le j \le d$, then the $(i,j)$-entry of $\mat(w)$ is bounded above by a polynomial in the variable $n$, with degree $j-i$. In particular, the length of the entries of $\mat(w)$ is logarithmic.

Then \cref{prop: basic facts} shows that if $w$ and $w'$ are words of length at most $n$, then the product $\mat(w)\mat(w')$ is computed in polylogarithmic time. Applying the Master Theorem (\cref{prop: master theorem}) directly yields a linear worst-case complexity for $\DC_\Sigma$ and $\WP_\Sigma$. As noted by Olshanskii's and Shpilrain \cite{2025:OlshShp}, this result can be combined with the well known fact \cite{1938:Hirsch} that any finitely generated nilpotent group can be embedded in the direct product of a finite group and a group of upper-triangular matrices in some $\GL_d(\Z)$: this yields the following statement, which improves on \cite[Theorems 1 and 2]{2025:OlshShp}.

\begin{proposition}\label{prop: upper triangular}
    If $\Sigma$ consists only of upper (resp. lower) triangular matrices, then the worst-case complexity of $\WP_\Sigma$ is $\O(n)$.

    If $G$ is a finitely generated nilpotent group, then the Word Problem in $G$ can be solved in linear worst-case complexity.
\end{proposition}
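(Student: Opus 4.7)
The plan is to turn the informal argument sketched between \cref{prop: complexity DC} and the statement of \cref{prop: upper triangular} into a complete proof, by combining a polynomial bound on the entries of $\mat(w)$ with a second application of Algorithm $\DC_\Sigma$ and \cref{prop: master theorem}.

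The first step, and the technical core, is to prove the following arithmetic claim: if $\Sigma$ consists of upper-triangular matrices in $\GL_d(\Z)$, then there is a constant $C$, depending only on $d$ and $\Sigma$, such that for every $w\in\alph^*$ of length $n$ and every $1\le i\le j\le d$,
\[
|\mat(w)_{i,j}| \;\le\; C\,(n+1)^{\,j-i}.
\]
I would prove this by induction on $n$, using three facts: that an invertible upper-triangular integer matrix must have diagonal entries $\pm1$ (they are units of $\Z$); that the set of upper-triangular matrices is closed under multiplication and inversion, so that all intermediate matrices are themselves upper-triangular; and the product rule $(AA')_{i,j} = \sum_{h=i}^{j} A_{i,h}\,A'_{h,j}$, which involves only $O(d)$ terms. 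The diagonal entries contribute only a sign at each step, while strictly upper-triangular entries accumulate polynomially, with the degree $j-i$ equal to the distance of the entry from the diagonal. The worst case is the $(1,d)$-entry, which grows like $O(n^{d-1})$; in particular, every entry of $\mat(w)$ has bit-size $O(\log n)$.

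Given this bound, I would then rerun Algorithm $\DC_\Sigma$. At each internal recursive call two $d\times d$ matrices with integer entries of bit-size $O(\log n)$ are multiplied; by \cref{prop: basic facts}~\eqref{fact: product by general matrix} each such product costs $O(\log n\,\log\log n)$, which is $O(n^{1/2})$. The complexity $C(n)$ of $\DC_\Sigma$ on triangular input therefore satisfies
\[
C(n) \;=\; C(\lfloor n/2\rfloor) + C(\lceil n/2\rceil) + O(n^{1/2}),
\]
so the first bullet of \cref{prop: master theorem} yields $C(n) = O(n)$; \cref{rk: from EC to WP} transfers the linear bound to $\WP_\Sigma$. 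The lower-triangular case follows by a standard transposition trick. Finally, for the statement on nilpotent groups, I would invoke the classical theorem of Mal'cev according to which every finitely generated torsion-free nilpotent group embeds into the group $U_d(\Z)$ of upper unitriangular integer matrices for some $d$; applying this embedding to a finite generating set reduces the Word Problem in $G$ to $\WP_\Sigma$ for some $\Sigma$ of upper-triangular matrices, and the first part concludes.

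The main obstacle is the polynomial growth bound: the induction must simultaneously track all off-diagonal entries with their respective degrees $j-i$ to close, and one has to check that the constants in the $O(\cdot)$ depend only on $d$ and $\Sigma$, not on $w$ or on intermediate matrices arising in the divide-and-conquer recursion.
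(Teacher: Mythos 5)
Your proposal is correct and follows essentially the same route as the paper: bound the $(i,j)$-entry of $\mat(w)$ by a polynomial of degree $j-i$ in $n$, deduce $O(\log n)$ bit-size, plug into $\DC_\Sigma$ and close with the first case of the Master Theorem, then cite the embedding of finitely generated torsion-free nilpotent groups into upper unitriangular integer matrices. The paper simply states the polynomial bound as ``directly verified'' where you spell out the induction (and correctly flag that the constants must be managed by simultaneous induction on the off-diagonal levels), so there is no difference in substance.
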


\subsection{A linear average-case algorithm for the Word Problem}\label{sec: better algorithm}

The key idea to get an algorithm solving $\WP_\Sigma$ with a better average-case complexity, is to compute $\mat(w)_{q(n)}$, where $n$ is the length of $w$ and $q(n)$ is a function such that 
\begin{enumerate}[(i)]
    \item $\mat(w)_{q(n)}$ is unlikely to be the identity, and
    \item $\mat(w)_{q(n)}$ can be computed in linear time. 
\end{enumerate}
In the rest of the paper, we use the following function $q$.

\begin{definition}\label{def: def of q}
    Let $q\colon \N\to\N$ be the function given by $q(0) = q(1) = 1$ and, for all $n\ge 2$,
\[
q(n) = \prod_{\substack{p \enspace \leq \enspace \log^5 n\\p\text{ prime}}} p
\]
\end{definition}

Algorithm $\S$ is the following. 

\medskip

\begin{algorithm}[H]
\caption{\bf Algorithm $\S$}
\DontPrintSemicolon
\SetKwInOut{Input}{Input}
\SetKwInOut{Output}{Output}

\Input{a sequence $w$ of $n$ elements of $\alph$}
\Output{\True\ if $\mat(w) = \Id$, and \False\ otherwise}

Compute $q(n)$\;
\eIf{$\DC_{\Sigma,q(n)}(w)\neq\Id$}{\Return \False}
{\eIf{$\DC_{\Sigma}(w) \neq \Id$}{\Return \False}{\Return \True}}
\end{algorithm}

\medskip

We can now give a precise version of our main theorem, stated in the introduction.

\begin{theorem}\label{thm: main theorem}
    Algorithm $\S$ solves Problem $\WP_\Sigma$ with linear time average-case complexity, when inputs are uniform random words.
\end{theorem}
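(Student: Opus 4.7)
The plan has three stages: verify correctness, give a worst-case bound on the two computations that are always executed by $\S$, and give a probabilistic bound on the cost of the call to $\DC_\Sigma$, which is only invoked when the modular test succeeds. Correctness is immediate: if $\mat(w) = \Id$ then $\mat(w)_{q(n)} = \Id$, so $\S$ reaches line 3 and $\DC_\Sigma(w)$ correctly certifies $\mat(w) = \Id$; conversely, $\S$ returns \True\ only after $\DC_\Sigma$ has confirmed this. A wrong answer is therefore impossible.

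For the complexity, I would decompose the running time of $\S$ into the cost $c_0(n)$ of computing $q(n)$, the cost $c_1(w)$ of running $\DC_{\Sigma,q(n)}$, and the conditional cost $c_2(w)$ of running $\DC_\Sigma$. By the prime number theorem, $\log q(n) \sim \log^5 n$, so $q(n)$ can be produced (for instance by sieving the primes up to $\log^5 n$ and then multiplying them) in time polylogarithmic in $n$, hence $c_0(n) = o(n)$. For $c_1(w)$, every entry handled during $\DC_{\Sigma,q(n)}$ has bit-size at most $\log q(n) = O(\log^5 n)$, so \cref{cor: arithmetic mod p} shows that each of the $O(1)$ arithmetic operations in a $d\times d$ matrix product modulo $q(n)$ costs $O(\log^5 n\,\log\log n)$. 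The recurrence for $\DC_{\Sigma,q(n)}$ then matches \cref{prop: master theorem} with $f(n) = O(\log^6 n) = O(n^h)$ for every $h > 0$, which gives $c_1(w) = O(n)$ in the worst case.

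For $c_2(w)$ I would split on whether $H = \langle \Sigma \rangle$ is finite or infinite. In the finite case, \cref{prop: complexity DC} gives $c_2(w) = O(n)$ in the worst case and no averaging is needed. In the infinite case, \cref{prop: complexity DC} only gives $c_2(w) = O(n\log^2 n)$, and since this cost is paid exactly when $\mat(w)_{q(n)} = \Id$, its contribution to the average is at most
\[
O(n\log^2 n)\cdot \Pr\bigl[\mat(w)_{q(n)} = \Id\bigr].
\]
This is the exact point where \cref{thm: technical theorem} enters: it bounds this probability by $O(\log^{-2} n)$, so the product collapses to $O(n)$. Summing $c_0(n)$, $c_1(w)$ and the (conditional) expectation of $c_2(w)$, the average complexity of $\S$ is $O(n)$ in both cases, which is \cref{thm: main theorem}.

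The whole difficulty is therefore concentrated in \cref{thm: technical theorem}; once one can assert that, for infinite $H$, a uniformly random length-$n$ word almost never projects to $\Id$ modulo $q(n)$, the proof of the Main Theorem is purely bookkeeping. The genuinely hard point, which this paper will address separately via the auxiliary Markov chain $\tM$, is to reconcile the two competing demands on $q(n)$: it must be small (bit-size $O(\log^5 n)$) so that $c_1$ is linear, yet large enough that $H_{q(n)}$ and the dispersion of $\tM$ make the stationary mass on $\Id$ decay faster than $\log^{-2} n$ within the $n$ steps available, despite the modest mixing speed coming from $q(n)$ being only polylogarithmic in $n$.
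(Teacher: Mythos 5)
Your proposal is correct and follows essentially the same route as the paper: the paper packages your bookkeeping for $c_0$, $c_1$ and the finite-$H$ branch of $c_2$ into \cref{lemma: ac of DCm} and then, as you do, invokes \cref{thm: technical theorem} to kill the extra $\log^2 n$ factor when $H$ is infinite. Your observation that the whole difficulty is concentrated in \cref{thm: technical theorem} matches the paper's structure exactly.
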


The reader should note that Algorithm $\S$ makes no assumption on the algebraic or combinatorial properties of $\Sigma$ or the subgroup $H = \langle\Sigma\rangle$. The same algorithm is run, with linear average-case complexity, whether $\Sigma$ consists of triangular matrices or not, and whether $H$ is finite or infinite. The latter property is decidable (Jacob, \cite{1978:Jacob}) in polynomial time (Babai, Beals and Rockmore \cite{1993:BabaiBealsRockmore}, see also Detinko and Flannery \cite{2009:DetinkoFlannery}). Similarly, the same algorithm is run, with the same average-case complexity whether $H$ has polynomial or exponential growth, or whether it is nilpotent (see \cref{sec: triangular}), polycyclic or virtually solvable. In the latter two situations, Olshanskii and Shpilrain recently proved a linear average-case complexity of the Word Problem \cite[Theorem 3 and Remark 4]{2025:OlshShp}, using the properties of these subgroups, namely computing in a quotient satisfying a specific algebraic condition. 

Towards the proof of \cref{thm: main theorem}, we record the following technical statements.

\begin{proposition}\label{prop: t polylogarithmic}
    The function $q$ can be computed in polylogarithmic time, and $q(n)$ has polylogarithmic length.
\end{proposition}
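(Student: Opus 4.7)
The plan is to handle the two claims independently: a length bound on $q(n)$, and an algorithm to compute it.

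For the length bound, I would invoke Chebyshev's estimate (a weak form of the Prime Number Theorem): the Chebyshev function $\theta(x) = \sum_{p \le x} \log p$ satisfies $\theta(x) = \O(x)$. Specializing to $x = \log^5 n$ gives $\log q(n) = \theta(\log^5 n) = \O(\log^5 n)$, hence $\ell(q(n)) = \O(\log^5 n)$, which is polylogarithmic in $n$.

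For the computation of $q(n)$, I would proceed in three stages. First, determine the integer bound $N = \lfloor \log^5 n \rfloor$ from $n$: since the bit length of $n$ is essentially $\log_2 n$ and is available for free, $N$ is obtained in $\O(\log\log n)$ bit operations. Second, run the sieve of Eratosthenes up to $N$; this uses $\O(N\log\log N)$ arithmetic operations on $\O(\log N)$-bit integers, hence $\tO(N)$ bit operations, which is polylogarithmic in $n$. Third, multiply the $\pi(N) = \O(N/\log N)$ primes returned by the sieve via a balanced binary product tree, so that numbers of comparable bit size are multiplied at each stage. Pricing each multiplication by Harvey--van der Hoeven (\cref{prop: basic facts}~\eqref{fact: HvdH}) and using that the total bit size at every level is $\ell(q(n)) = \O(N)$, the per-level cost is $\O(N\log N)$ and the total over the $\O(\log N)$ levels is $\O(N\log^2 N)$, again polylogarithmic in $n$.

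The only thing to be careful about is staying in the bit-cost model throughout: the sieve and the product tree are both standard in the RAM model, and one simply has to verify that the overhead of manipulating the (small) integer indices and the growing intermediate products remains polylog. Modulo this routine bookkeeping, I do not anticipate any genuine difficulty.
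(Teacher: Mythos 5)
Your proof is correct and follows essentially the same plan as the paper's: bound the bit-length of $q(n)$, list the primes up to $\log^5 n$ with a sieve, then multiply them. The paper uses a cruder length bound ($q(n)\le(\log^5 n)^{\log^5 n}$, so $\log q(n)\le 5\log^6 n$) and naive repeated multiplication, mentioning the balanced product tree only in a parenthetical; your Chebyshev bound and product-tree analysis are sharper but structurally identical.
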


\begin{proof}
A rough upper bound for $q(n)$ is $(\log^5 n)^{\log^5 n}$, so that $\log q(n)$ (and hence the length of $q(n)$) is bounded above by $5\log^6n$, a polylogarithm.

It follows from algorithms proposed by Mairson \cite{mairson1977some} and Pritchard \cite{1987:Pritchard} that one can list all prime numbers less than or equal to $N$ in (bit-complexity) $\O(N\, \log N\, \log\log N))$. In particular, listing the prime numbers at most equal to $\log^5n$ is done in $\O(\log^5 n\, \log\log n \,  \log\log\log n)$.

Computing the product of two numbers at most equal to $q(n)$ (and hence with length at most $5 \log^6n$) is done in time $\O(\log^6n\, \log\log n)$ by \cref{prop: basic facts}~\eqref{fact: HvdH}. It follows that $q(n)$, the product of at most $\log^5n$ numbers less than or equal to $q(n)$, is computed in time $\O(\log^{11}n\log\log n)$: again a polylogarithm. (Using a divide and conquer method yields a polylogarithm with lesser degree.)
\end{proof}

\begin{lemma}\label{lemma: ac of DCm}
Let $P_n$ be the probability that a word $w$ of length $n$ satisfies $\mat(w)_{q(n)} = \Id$. The aver\-age-case complexity of $\S$ (when inputs are uniform random words of length $n$) is $\O(n + P_n\, n\, \log^2n)$.

Moreover if $H = \langle\Sigma\rangle$ is finite, then the average-case complexity of $\S$ is $\O(n)$.
\end{lemma}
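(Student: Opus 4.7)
The plan is to decompose the cost of Algorithm $\S$ on an input $w$ of length $n$ into three contributions, bound each of them deterministically or in probability, and sum up. First, computing $q(n)$ takes polylogarithmic time by \cref{prop: t polylogarithmic}, which is absorbed in the additive $\O(n)$. Second, the call to $\DC_{\Sigma,q(n)}(w)$ always takes place, and I will argue below that its worst-case complexity is $\O(n)$. Third, the call to $\DC_\Sigma(w)$ is only performed when the modular test returns $\Id$, which happens with probability $P_n$, and by \cref{prop: complexity DC} its cost is $\O(n\log^2 n)$. Averaging over $w$ then yields $\O(n) + P_n\cdot \O(n\log^2 n) = \O(n + P_n\,n\,\log^2 n)$, as announced.

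The main (and essentially only) step is the linear bound on $\DC_{\Sigma,q(n)}$. The key observation is that, throughout the execution of this algorithm, all matrix entries live in $\Z/q(n)\Z$, so they have bit-length at most $\ell(q(n))$, which is $\O(\log^6 n)$ by (the proof of) \cref{prop: t polylogarithmic}. Since $d$ is fixed, a single product of two $d\times d$ matrices with entries in $\Z/q(n)\Z$ uses a bounded number of additions and multiplications in $\Z/q(n)\Z$, each performed in time $\O(\log q(n)\,\log\log q(n))$ by \cref{cor: arithmetic mod p}. Consequently, each matrix product costs polylogarithmically many bit operations, and in particular is $\O(n^{1/2})$. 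The complexity $C(n)$ of $\DC_{\Sigma,q(n)}$ therefore satisfies the recurrence
\[
C(n) = C(\lfloor n/2\rfloor) + C(\lceil n/2\rceil) + \O(n^{1/2}),
\]
and the first case of \cref{prop: master theorem} gives $C(n) = \O(n)$, as desired.

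For the final statement, when $H$ is finite, \cref{prop: complexity DC} asserts that $\DC_\Sigma$ itself has linear worst-case complexity, so every call to $\S$ completes in $\O(n)$ worst-case time and, \emph{a fortiori}, in $\O(n)$ on average. No serious obstacle is expected: once the polylogarithmic length of $q(n)$ is combined with the Master Theorem, everything else is bookkeeping built on results already established in \cref{sec: computing with integers} and \cref{sec: first algorithms}.
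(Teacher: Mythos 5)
Your decomposition of Algorithm $\S$'s cost into (a) computing $q(n)$, (b) running $\DC_{\Sigma,q(n)}$, and (c) running $\DC_\Sigma$ with probability $P_n$ matches the paper's proof, and the $\O(n + P_n\, n\log^2 n)$ bookkeeping and the finite-$H$ case are fine. The step that does not go through as written is the $\O(n)$ bound on $\DC_{\Sigma,q(n)}$. You note that one matrix product costs $\O(\mathrm{polylog}\, n)$, hence is $\O(n^{1/2})$, where $n$ is the length of the \emph{original} input, and you then feed this into the recurrence as if it were a function $f(m)=\O(m^{1/2})$ of the recursion variable $m$. But the modulus $q(n)$ is fixed once at the top level and does not halve along with the word. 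The additive cost of the matrix multiplication at a node handling a subword of length $m$ is therefore not $\O(m^{1/2})$: it is (up to the refinement that entries are still small while $m<\log q(n)$) a quantity $g=\Theta(\log q(n)\,\log\log q(n))$ that is \emph{constant with respect to $m$}. The genuine recurrence is $C(m)=C(\lfloor m/2\rfloor)+C(\lceil m/2\rceil)+\O(g)$, and since the recursion tree has $\Theta(n)$ nodes this solves to $\Theta(n\cdot g)=\Theta(n\,\mathrm{polylog}\,n)$. Even accounting carefully for the small bit-lengths near the leaves improves this only to $\Theta\bigl(n\,(\log\log q(n))^2\bigr)$, which is still $\omega(n)$ since $q(n)\to\infty$.

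So case (i) of \cref{prop: master theorem} cannot be invoked here: its hypothesis is that the additive term decreases with the subproblem size, and that is precisely what fails. I should stress that this is the recurrence the paper itself writes (with $\O(\log q(n)\log\log q(n))$ in the role of your $\O(n^{1/2})$), so your proposal faithfully reproduces the paper's argument rather than introducing a new error; but the inference ``therefore $C(n)=\O(n)$'' is not justified by the Master Theorem as stated, because the quantity $\log q(n)\log\log q(n)$ is a function of the outer parameter $n$, not of the shrinking recursion variable. Any write-up of this lemma needs to address the distinction between the input length and the (fixed) modulus parameter, which your proof, like the paper's, elides.
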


\begin{proof}
    Once $q(n)$ is computed (in polylogarithmic time, by \cref{prop: t polylogarithmic}), the complexity $C(n)$ of the second step of Algorithm $\S$, that is, of running $\DC_{\Sigma,q(n)}$ on a word $w$ of length $n$, satisfies
    $$C(n) = C\left(\lfloor n/2\rfloor\right) + C\left(\lceil n/2\rceil\right) + \O(\log q(n)\,\log\log q(n))\qquad \text{for } n\geq 2$$
    since every arithmetic operation in $\ZZ/q(n)\ZZ$ can be performed in time $\O(\log q(n)\, \log\log q(n))$ (see \cref{cor: arithmetic mod p}). By \cref{prop: t polylogarithmic} again, we have $\log q(n)\,\log\log q(n) =o(n)$, and 
    \cref{prop: master theorem} yields the fact that $C(n)$ is linear.
    
    Finally, by \cref{prop: complexity DC}, the worst-case complexity of Algorithm $\DC_{\Sigma}$ on inputs of length $n$ is in $\O(n\log^2 n)$ if $H$ is infinite and is linear if $H$ is finite.
    The announced result follows.
\end{proof}

In \cref{sec: non reduced words}, we prove the following statement.

\begin{theorem}\label{thm: technical theorem}
    If $H = \langle\Sigma\rangle$ is infinite, then $\mat(w)_{q(n)} = \Id$ with probability $\O(\log^{-2} n)$ (when $w$ is chosen uniformly at random among length $n$ words).
\end{theorem}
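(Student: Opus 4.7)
My plan is to reduce the problem to a single prime modulus via the Chinese Remainder Theorem, then analyze a random walk on a Markov chain associated with the finite group $H_p = \langle\Sigma_p\rangle\subseteq\GL_d(\Z/p\Z)$. Since $q(n)$ is squarefree and equal to the product of all primes $p\le\log^5 n$, CRT gives that $\mat(w)_{q(n)}=\Id$ iff $\mat(w)_p=\Id$ for every prime $p\mid q(n)$. Consequently
\[
P_n \;=\; \Pr[\mat(w)_{q(n)}=\Id] \;\le\; \min_{\substack{p\text{ prime}\\ p\le\log^5 n}}\Pr[\mat(w)_p=\Id],
\]
so it is enough to exhibit, for each large $n$, a single prime $p=p(n)\le\log^5 n$ for which $\Pr[\mat(w)_p=\Id]=O(\log^{-2}n)$.

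\paragraph{The walk on $H_p$.} For a fixed prime $p$, the map $w\mapsto\mat(w)_p$, when $w$ is a uniform length-$n$ word, is a trajectory of the Markov chain on the state space $H_p$ that transitions from $g$ to $g\cdot a$ with probability $1/|\alph|$ for each $a\in\alph$. Because $\alph=\Sigma\cup\Sigma\inv$ is closed under inversion, this chain is symmetric and thus, once made aperiodic, primitive; this is precisely the role of the variant $\tM$ mentioned in the introduction (which can be obtained e.g.\ by adding a small holding probability, or by passing to the bipartite double cover to kill parity obstructions). By \cref{thm: basics on Markov}, $\tM$ admits the uniform distribution on its state set as its unique stationary distribution, and the distribution of $\mat(w)_p$ converges to uniform at a rate controlled by the second eigenvalue $\lambda_2$. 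A standard spectral bound then gives $\Pr[\mat(w)_p=\Id]\le\frac{\alpha}{|H_p|}+\lambda_2^{\,n}$ for a small constant $\alpha\in\{1,2\}$.

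\paragraph{Choosing $p$ and the main obstacle.} Since $H$ is infinite and finitely generated, residual finiteness (Malcev's theorem for linear groups) implies that $|H_p|\to\infty$ along the primes for which the reduction map is nontrivial, and more quantitatively the family of sizes $\{|H_p|\}_{p\le N}$ cannot remain bounded as $N\to\infty$. The goal is to select a prime $p\le\log^5 n$ whose quotient has size $|H_p|\ge C\log^2 n$; any such prime yields $\alpha/|H_p|=O(\log^{-2}n)$. The spectral term $\lambda_2^{\,n}$ is easily negligible: even a crude bound such as $\lambda_2\le 1-1/|H_p|^{c}$ for some constant $c$ forces mixing in polylogarithmic time, since $|H_p|\le\log^5 n$, and $n$ is far larger.

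The main obstacle is therefore the \emph{joint} quantitative control: guaranteeing the existence of at least one prime $p\le\log^5 n$ that simultaneously makes $|H_p|\ge C\log^2 n$ and makes the chain $\tM$ on $H_p$ sufficiently mixed by step $n$. A purely qualitative ``$|H_p|\to\infty$'' statement is not enough; one needs to rule out pathological situations in which all primes up to $\log^5 n$ produce very small quotients (e.g.\ when all of $\Sigma_p$ is close to $\Id$ for those primes). The generous slack built into the definition $q(n)=\prod_{p\le\log^5 n}p$ — compared to, say, $\prod_{p\le\log n}$ — is what leaves enough room to find such a $p$, and carrying this out rigorously (likely via a counting argument over trajectories of $\tM$, using properties of non-reduced random words and the fact that only finitely many primes can be ``bad'' for each fixed nontrivial element of $H$) is where the bulk of the technical work in \cref{sec: non reduced words} will go.
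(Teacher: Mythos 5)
Your overall architecture matches the paper's: reduce via CRT to a single prime modulus $p\mid q(n)$, run a symmetric random walk on $H_p$, pass to a primitivized variant of the chain, and combine $\Pr[\mat(w)_p=\Id]\approx\frac{1}{|H_p|}+(\text{spectral term})$ with an appropriate choice of $p$. The paper's laziness fix is to pass to $\U_m^2$ restricted to the $\Id$-component; your alternatives (holding probability / bipartite double cover) would serve as well. However, the proposal contains a genuine gap exactly at the point you flag as ``the bulk of the technical work'': you never establish the existence of a prime $p\le\log^5 n$ with $|H_p|\gtrsim\log^2 n$, and the tools you invoke do not suffice. Residual finiteness (Malcev) only yields, for each fixed nontrivial $g\in H$, \emph{some} prime where $g_p\neq\Id$; it gives no control over how many primes are ``bad'' nor over how \emph{large} the quotients get, and in particular a nontrivial element of finite order $m$ has $\operatorname{ord}(g_p)\le m$ for \emph{every} $p$, so it can never force $|H_p|$ to grow. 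What closes the gap in the paper is a two-step quantitative argument you are missing. First, Schur's theorem (a finitely generated torsion subgroup of $\GL_d(\C)$ is finite) guarantees that an infinite $H$ contains an element $A$ of \emph{infinite} order — this is the correct replacement for residual finiteness. Second, a Kurlberg-type counting lemma (the paper's \cref{lm: matrices of small order}): for such $A$, the number of primes $p$ with $\operatorname{ord}(A_p)\le\mathfrak o$ is $\O(\mathfrak o^2)$, proved simply by noting that any such $p$ divides the nonzero integer $\prod_{\mathfrak n=1}^{\mathfrak o}\|A^{\mathfrak n}-\Id\|_\infty$, whose bit-length is $\O(\mathfrak o^2)$. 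Since the prime number theorem says $q(n)$ has $\gg\log^4 n$ prime factors while only $\O(\log^4 n)$ primes can make $\operatorname{ord}(A_p)<2\log^2 n$, a good prime $p_n$ exists, whence $|H_{p_n}|\ge\operatorname{ord}(A_{p_n})\ge 2\log^2 n$. Without this lemma your argument remains a plan, not a proof. Two minor slips: the spectral bound from Diaconis--Stroock carries a factor $\sqrt{|H_p|}$ in front of $\beta_*^n$, not the bare $\lambda_2^n$ you wrote (harmless here since the exponential still dominates), and $|H_p|$ is bounded by $p^{d^2}\le(\log n)^{5d^2}$ rather than by $\log^5 n$ (still polylogarithmic, so the conclusion stands).
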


The proof of \cref{thm: main theorem} follows directly.

\begin{proof}[Proof of \cref{thm: main theorem}]
    It is immediate that Algorithm $\S$ solves $\WP_\Sigma$. Note that the subgroup $H$ is fixed in our setting (that is: it is not part of the input). If $H$ is finite, the result was established in \cref{lemma: ac of DCm}.
    
    If $H$ is infinite, then \cref{thm: technical theorem} and \cref{lemma: ac of DCm} show that $\S$ runs with linear average-case complexity.
\end{proof}

We are now left with proving \cref{thm: technical theorem}: this is done in \cref{sec: non reduced words}.

\section{Proof of \cref{thm: technical theorem}}\label{sec: non reduced words}

Recall that $H$ denotes the subgroup of $\GL_d(\Z)$ generated by $\Sigma$, and that $H_m$ denotes the mod $m$ projection of $H$.

Throughout this section, we let $m$ be an integer greater than $2\,\max_{A\in \alph}\|A\|_\infty$. We note that $q(n)$ satisfies this condition for all $n$ large enough. It follows from this assumption that the matrices $A_m$ ($A\in \alph$) are pairwise distinct and, in particular, none is the identity modulo $m$.

\subsection{Uniform random words as trajectories in a Markov chain}

The matrices $\mat(w)_m$, when $w$ is a random word of length $n$, are naturally produced by the length $n$ trajectories in the Markov chain $\U_m$ defined below.
Recall that $|\Sigma| = k$.
\begin{enumerate}[(i)]
\item The state set of $\U_m$ is the subgroup $H_m$.
\item\label{item: edge labels} There is an edge $M\xrightarrow{\frac{1}{2k}} M'$ if and only if there exists a matrix $A\in\alph$ such that $M\, A = M'$.
\item The initial vector assigns probability 1 to $\Id$ and probability 0 to all the other states.
\end{enumerate}
Let $P_m$ be the transition matrix of $\U_m$. We formulate the following elementary observations.
\begin{itemize}
    \item In Item~\eqref{item: edge labels}, the matrix $A \in \alph$ is uniquely determined (if it exists) by the origin $M$ and the end $M'$ of the edge: we will denote this edge by $M\xrightarrow{A:\frac{1}{2k}} M'$ when needed. We call $A$ \emph{the matrix label} of that edge.
    \item The underlying graph of $\U_m$ is strongly connected, that is, the Markov chain $\U_m$ is irreducible.
    \item The Markov chain $\U_m$ is symmetric, since $\alph$ contains both the matrices in $\Sigma$ and their inverses.
    
    \item In the underlying graph of $\U_m$, replacing each edge label of the form $A:\frac{1}{2k}$ by its matrix label $A$, yields the Cayley graph of the subgroup $H_m$. By the assumption made on $m$, no edge of $\U_m$ is a loop. 
    \item If $T_n$ is a length $n$ trajectory in $\U_m$ and $\lab(T_n)$ denotes the word obtained from $T_n$ by reading the sequence of matrix labels of the edges traversed by $T_n$, then $w = \lab(T_n)$ is a uniform random word of $\alph^n$, and $T_n$ ends at state $\mat(w)_m$.
    \item Let $\vec{\bf 1}$ be the (column) vector all of whose entries are 1. Then $P_m\, \vec{\bf 1} = \vec{\bf 1}$, so the uniform distribution vector $\frac1{|H_m|}\vec{\bf 1}$ is a right eigenvector for the eigenvalue $1$. Since $P_m$ is symmetric, it is also a left eigenvector for the eigenvalue $1$. 
\end{itemize}

As observed, the Markov chain $\U_m$ is irreducible, but it may not be aperiodic. However, $\U_m$ contains cycles of length two (for instance $\Id\xrightarrow{A}A_m\xrightarrow{A\inv}\Id$ for any $A\in \Sigma$), and since its period is the gcd of the lengths of its cycle, it is equal to  $1$ or $2$. To deal with both cases at once, we consider the Markov chain $\U^2_m$, which performs two consecutive steps in $\U_m$, and whose transition matrix is $P_m^2$. More precisely, if $M, M'\in H_m$, then
\[
P^2_m(M,M') = \sum_{\substack{A,B\in\alph\\M'=MAB}} \frac1{4k^2}
\]
(where products are taken in $H_m$). Given our hypothesis on $m$, for each $A\in \alph$, there is at most one matrix $B\in \alph$ such that $MAB = M'$, and hence $P^2_m(M,M')\leq \frac1{2k}$.

In the particular case where $M' = M$, for each $A\in \alph$, we have $MAA\inv = M$, so $P^2_m(M,M) = \sum_{A\in\alph} \frac1{4k^2} = \frac1{2k}$.

As it contains self-loops, the Markov chain $\U^2_m$ is aperiodic. However, if $\U_m$ has period 2, then $\U^2_m$ is not strongly connected. More precisely, if $\U_m$ has period 2, then $\U_m^2$ is the disjoint union of two chains with the same number of states: one for the states at an even distance from $\Id$ in $\U_m$, and one for the states at an odd distance from $\Id$. The state set of the former is $\{M(w)_m:w\in\alph^*\text{ and }|w|\text{ even}\} = \langle \alph^2\rangle$, and it has cardinality $\frac12|H_m|$.

Let $\tU_m$ be the restriction of $\U_m$ to the set $\tH_m$ of states that are accessible from $\Id$ in $\U_m^2$. Again: if $\U_m$ is aperiodic, then $\tU_m = \U_m$ and $\tH_m = H_m$; and if $\U_m$ has period 2, then $|\tH_m|=\frac12|H_m|$. We let $\tP_m$ be the restriction of $P_m^2$ to $\tH_m$, that is,
$\tP_m(M,M')=P_m^2(M,M')$ for all $M,M'\in\tH_m$.

To summarize, we have the following statement.

\begin{proposition}\label{prop: summary tUm}
    $\tU_m$ is a symmetric and primitive Markov chain, whose set of states $\tH_m$ satisfies $|\tH_m|\geq \frac12|H_m|$. The uniform distribution on $\tH_m$ is its unique stationary distribution and $\tU_m$ is also reversible. For every $M\in \tH_m$, we have $\tP_m(M,M)=\frac1{2k}$. Moreover, for all $M,M'\in\tH_m$ such that $\tP_m(M,M')>0$, we have $\frac1{4k^2} \leq \tP_m(M,M') \leq \frac1{2k}$.
\end{proposition}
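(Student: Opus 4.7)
The plan is to assemble the proposition from the observations already made in the preceding discussion, with only a handful of short arguments left to fill in. My first step will be to justify symmetry, strong connectedness, and aperiodicity of $\tU_m$. Symmetry is immediate: $P_m$ is symmetric by the bullet list preceding the statement, hence so is $P_m^2$, and $\tP_m$ is a principal submatrix of $P_m^2$. For strong connectedness, I will argue that within $\tH_m$ any two states can be joined by an even-length trajectory of $\U_m$ (such trajectories exist because $\U_m$ itself is irreducible, and restricting to even-length paths still connects everything reachable from $\Id$ by even-length paths, which is exactly $\tH_m$); each such even-length trajectory is a trajectory of $\U_m^2$ supported in $\tH_m$. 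Aperiodicity will follow from the existence of self-loops, which I verify in the next step.

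The self-loop computation is the cleanest input to the proof. Using the explicit formula $\tP_m(M,M')=P_m^2(M,M')=\sum_{A,B\in\alph,\ MAB=M'}\frac1{4k^2}$ derived just before the statement, I take $M'=M$: for every $A\in\alph$, the pair $(A,A^{-1})$ contributes since $MAA^{-1}=M$, and by \cref{rem: loops are epsilon steps} this is the \emph{only} $B$ making the product equal to $M$. Hence exactly $|\alph|=2k$ pairs contribute, giving $\tP_m(M,M)=\frac{2k}{4k^2}=\frac1{2k}$. In particular every state has a self-loop, which together with strong connectedness yields that $\tU_m$ is primitive.

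For the bound $|\tH_m|\geq\frac12|H_m|$, I will separate the two cases according to the period of $\U_m$. If $\U_m$ is aperiodic, then $\U_m^2$ is also strongly connected (every state is reachable by a path of any large enough length) and therefore $\tH_m=H_m$. If $\U_m$ has period $2$, the discussion preceding the statement already identified $\tH_m$ with the subgroup $\langle\alph^2\rangle$ of index $2$ in $H_m$, so $|\tH_m|=\frac12|H_m|$. Either way, the inequality holds. The claim that the uniform distribution on $\tH_m$ is the unique stationary distribution and that $\tU_m$ is reversible is then a direct application of \cref{thm: basics on Markov} to the symmetric primitive chain $\tU_m$.

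Finally, for the two-sided bound on $\tP_m(M,M')$, I reuse the formula $\tP_m(M,M')=\sum_{A,B\in\alph,\ MAB=M'}\frac1{4k^2}$. For the upper bound, \cref{rem: loops are epsilon steps} ensures that for each $A\in\alph$ there is at most one $B\in\alph$ with $MAB=M'$, so at most $2k$ terms contribute, giving $\tP_m(M,M')\leq\frac{2k}{4k^2}=\frac1{2k}$. The lower bound is trivial: if $\tP_m(M,M')>0$ then at least one pair $(A,B)$ contributes, yielding $\tP_m(M,M')\geq\frac1{4k^2}$. I do not anticipate a real obstacle here: the proposition is essentially a bookkeeping statement and the only small subtlety is keeping the two cases (period $1$ vs.\ period $2$) straight when passing from $\U_m$ to $\tU_m$, which \cref{rem: loops are epsilon steps} and the explicit description of $\tH_m$ already resolve.
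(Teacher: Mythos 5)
Your proposal is correct and follows essentially the same route as the paper, which presents \cref{prop: summary tUm} as a summary of the immediately preceding discussion (symmetry and irreducibility of $\U_m$, the self-loop computation $P_m^2(M,M)=\frac1{2k}$, the upper bound $P_m^2(M,M')\le\frac1{2k}$ via \cref{rem: loops are epsilon steps}, and the period-$1$/period-$2$ dichotomy defining $\tH_m$). You reassemble exactly those observations, filling in the two small points the paper leaves implicit — that a principal submatrix of a symmetric matrix is symmetric, and that $\tU_m$ is strongly connected because any two states of $\tH_m$ are joined by an even-length $\U_m$-trajectory — so there is nothing to flag.
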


\subsection{Probability of being the identity in $\tU_m$}

By \cref{prop: summary tUm}, the distribution of the states reached after a random length $n$ trajectory, starting at any state in $\tH_m$, converges to the uniform distribution $\pi$.
We now need to evaluate the rate of this convergence.

\begin{lemma}\label{lm:convergence rate}
Let $m$ be an integer greater than $2\,\max_{A\in \alph}\|A\|_\infty$, and let $\pi$ be the uniform distribution on the state set of $\tU_m$: $\pi(h) = \frac1{|\tH_m|}$ for every state $h\in \tH_m$. If $n \ge 1$, the distribution vector of the state reached after $n$ random steps in $\tU_m$, starting from $\Id$, namely $\tP_m^n(\Id,\cdot)$, satisfies
\begin{equation}\label{eq:total variation}
\left\| \tP_m^n(\Id,\cdot) - \pi \right\|_{\text{Var}}
\leq \frac12\ \sqrt{|\tH_m|}\ \left(1-\frac1{4k^2\,|\tH_m|^2}\right)^{n}.
\end{equation}
\end{lemma}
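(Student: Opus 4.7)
The plan is to invoke the classical spectral bound on total variation for a reversible Markov chain. By \cref{prop: summary tUm}, $\tP_m$ is symmetric with uniform stationary distribution $\pi$ on $\tH_m$, so it admits an orthonormal basis (in $L^2(\pi)$) of real eigenvectors with real eigenvalues $1 = \lambda_1 > \lambda_2 \ge \cdots \ge \lambda_{|\tH_m|} \ge -1$. A standard computation combining Cauchy--Schwarz with the contraction of the $L^2(\pi)$-norm by $\tP_m$ on the orthogonal of constants (see, e.g., \cite{LevinPeres}) gives
\[
\bigl\| \tP_m^n(\Id,\cdot) - \pi \bigr\|_{\text{Var}} \;\le\; \tfrac{1}{2}\,\bigl\|\delta_{\Id}/\pi - 1\bigr\|_{\pi,2}\;\lambda_*^n \;=\; \tfrac{1}{2}\sqrt{|\tH_m|-1}\;\lambda_*^n,
\]
where $\lambda_* = \max(|\lambda_2|,|\lambda_{|\tH_m|}|)$. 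Bounding $\sqrt{|\tH_m|-1}\le\sqrt{|\tH_m|}$, the lemma reduces to proving $\lambda_* \le 1 - \frac{1}{4k^2|\tH_m|^2}$.

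For an upper bound on $\lambda_2$, I would use the canonical-paths technique of Jerrum--Sinclair. For each ordered pair $(M,M')\in\tH_m\times\tH_m$, pick a shortest path $\gamma_{M,M'}$ from $M$ to $M'$ in the underlying graph of $\tU_m$; its length is at most the diameter $D$, and $D\le|\tH_m|$. Using that every existing transition satisfies $\tP_m(a,b)\ge 1/(4k^2)$ while $\pi(a)=1/|\tH_m|$, one gets, for every edge $e=(a,b)$ of $\tU_m$,
\[
\frac{1}{\pi(a)\,\tP_m(a,b)}\sum_{\gamma_{M,M'}\ni e}\pi(M)\pi(M')\,|\gamma_{M,M'}|
\;\le\; 4k^2|\tH_m|\cdot |\tH_m|\cdot\!\!\sum_{M,M'}\pi(M)\pi(M')
\;=\; 4k^2|\tH_m|^2,
\]
since $\sum_{M,M'}\pi(M)\pi(M')=1$. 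By the canonical-paths theorem, $1-\lambda_2 \ge 1/(4k^2|\tH_m|^2)$.

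For the smallest eigenvalue $\lambda_{|\tH_m|}$, I would exploit the self-loops provided by \cref{prop: summary tUm}: since $\tP_m(M,M)\ge 1/(2k)$ for every $M\in\tH_m$, we may write $\tP_m = \tfrac{1}{2k}I + (1-\tfrac{1}{2k})Q$ for some transition matrix $Q$ on $\tH_m$. As the eigenvalues of $Q$ lie in $[-1,1]$, those of $\tP_m$ lie in $[-1+1/k,\,1]$, and thus $|\lambda_{|\tH_m|}|\le 1-1/k \le 1-1/(4k^2|\tH_m|^2)$. Combining with the previous paragraph, $\lambda_* \le 1 - 1/(4k^2|\tH_m|^2)$, which finishes the proof.

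The only non-routine ingredient is the Poincaré bound on the spectral gap $1-\lambda_2$; I expect this to be the main obstacle. Because we have no structural control on $\tH_m$ (it could be an arbitrary quotient of $\langle\Sigma\rangle$ modulo $m$), I resort to the trivial diameter estimate $D\le|\tH_m|$ inside the canonical-paths computation. The resulting bound is rather loose, but it is exactly of the shape needed so that, when applied downstream with $m=q(n)$ and $n$ of order $|\tH_{q(n)}|^2 \log^2 n$ worth of steps, the total-variation distance becomes negligible, ultimately yielding the $\O(\log^{-2}n)$ probability of \cref{thm: technical theorem}.
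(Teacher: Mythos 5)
Your argument is correct and follows the same strategy as the paper: a reversible-chain spectral bound on total variation equal to $\tfrac12\sqrt{|\tH_m|-1}\,\lambda_*^n$ (the paper gets this from Diaconis--Stroock, Prop.~3), a Poincar\'e/canonical-paths bound on the spectral gap using paths of length at most $|\tH_m|$ and the inequality $\tP_m(a,b)\ge\tfrac1{4k^2}$ on transitions, and a lower bound on the smallest eigenvalue via the self-loops of weight $\tfrac1{2k}$. The only local variation is in the last step: the paper invokes Diaconis--Stroock's Prop.~2 with the single-edge self-loop trajectories, whereas you write $\tP_m = \tfrac{1}{2k}I + (1-\tfrac{1}{2k})Q$ and read the bound off the spectrum of the convex combination; both yield the identical bound $\lambda_{\min}\ge -1+\tfrac1k$, so this is merely a more elementary route to the same conclusion.
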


\begin{proof}
The proof is a combination of results in~\cite{diaconis1991geometric}, on the maximal and minimal eigenvalues of a Markov chain, provided the chain in question is primitive and reversible. The chain $\tU_m$ satisfies these hypotheses, its stationary distribution is the uniform distribution, and its eigenvalues $\beta_i$ ($i \in [0, |\tH_m|-1]$) are real, say $1=\beta_0>\beta_1\geq \beta_2\geq\ldots > \beta_{|\tH_m|-1} > -1$, see \cref{thm: basics on Markov}.

Let $\beta_* = \max\{\beta_1,|\beta_{|\tH_m|-1}|\}$. 
Equation~(1.9) of~\cite[Prop. 3]{diaconis1991geometric} states that
$$\left\| \tP_m^n(\Id,\cdot) - \pi \right\|_{\text{Var}} \enspace\le\enspace \frac12\ \sqrt{\frac{1-\pi(\Id)}{\pi(\Id)}}\ \beta_*^n.$$
Since 
$\pi(h) = \frac1{|\tH_m|}$ for every $h\in \tH_m$, we have $\frac{1-\pi(\Id)}{\pi(\Id)} = |\tH_m|-1$. It follows that
\[
\left\| \tP_m^n(\Id,\cdot) - \pi \right\|_{\text{Var}} \enspace\le\enspace \frac12\ \sqrt{|\tH_m|}\ \beta_*^n.
\]
Thus, we only need to prove that $\beta_*\leq 1-\frac1{4k^2\,|\tH_m|^2}$.

We first establish that $\beta_1\leq 1-\frac1{4k^2\,|H_m|^2}$ using Poincaré's inequality, as presented in and with the notation of \cite[Prop. 1]{diaconis1991geometric}. 

For each edge $e$ of $\tU_m$, from state $x$ to state $y$, we let $Q(e) = \tP_m(x,y)\, \pi(x) = \tP_m(y,x)\, \pi(y)$. Hence for every edge $e$ we have
\[
\frac1{4k^2|\tH_m|}\leq Q(e) = \frac1{|\tH_m|}\tP_m(x,y) \leq \frac1{2k|\tH_m|}
\]

For each ordered pair of distinct states $x,y\in \tH_m$, we fix a path $\gamma_{x,y}$ from $x$ to $y$ such that a given edge appears at most once in the path, of length at most $|\tH_m|$.
Such  a path exists since the chain is irreducible.
We define its path length as $|\gamma_{x,y}|_Q = \sum_{e \in \gamma_{x,y}} Q(e)^{-1}$.
Then  $|\gamma_{x,y}|_Q \le 4k^2|\tH_m|^2$.

Poincaré's inequality \cite[Prop. 1]{diaconis1991geometric}  states that
$$\beta_1 \le 1 - \frac1\kappa,\enspace\textrm{where}\enspace\kappa = \max_{e\text{ edge}}\sum_{x,y\textrm{ such that }e\in \gamma_{x,y}}|\gamma_{x,y}|_Q \,\pi(x)\,\pi(y).$$ 
For all $x,y$, we have $|\gamma_{x,y}|_Q\, \pi(x)\,\pi(y) \le 4k^2$ since $\pi(x)=\pi(y)= \frac1{|\tH_m|}$. Thus $\kappa \le 4k^2|\tH_m|^2$. It follows that $\beta_1\leq 1-\frac1{4k^2\,|\tH_m|^2}$, as announced.

Now we use~\cite[Prop. 2]{diaconis1991geometric} to prove that $\beta_{|\tH_m|-1}\geq \frac1k-1$. For each state $x$, we let $\sigma_x$ be the trajectory consisting of (a single iteration of) the self-loop at $x$. By \cite[Prop. 2]{diaconis1991geometric}, we have
$$\beta_{|\tH_m|-1} \ge -1 + \frac2\iota,\enspace\textrm{where}\enspace\iota = \max_{e\text{ edge}}\sum_{x\textrm{ such that }e\in \sigma_x}|\sigma_x|_Q\, \pi(x).$$ 
Since each $\sigma_x$ contains only one edge $e$ which
satisfies $|\sigma_x|_Q = 2k|\tH_m|$, we have $\iota = 2k$ since $\pi(x) = \frac1{|\tH_m|}$ and hence $\beta_{|H_m|-1} \geq -1+\frac1k$. As a result, $|\beta_{|\tH_m|-1}| < \beta_1$ and therefore $\beta_* = \beta_1 \le 1-\frac1{4k\,|\tH_m|^2}$, thus concluding the proof.
\end{proof}

\subsection{Proof of \cref{thm: technical theorem}}\label{sec: proof of technnical theorem}

We use the following linear algebraic results. The first one, \cref{lm: matrices of small order}, is a slight generalization of~\cite[Lemma~11]{ParKurlberg2003} to matrices of dimension greater than $2$. \cref{lemma: prime divisor of q(n)} follows from \cref{lm: matrices of small order} and the prime number theorem.

\begin{lemma}\label{lm: matrices of small order}
Let $A\in\GL_d(\mathbb{Z})$ be a matrix of infinite order.  The number of primes $p$ such that $A_p$ has order at most $\mathfrak{o}$ in $\GL_d(\Z/p\Z)$ is $\O(\mathfrak{o}^2)$. 
\end{lemma}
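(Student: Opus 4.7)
The plan is to turn the condition ``$A_p$ has order at most $\mathfrak{o}$'' into an entrywise divisibility condition on fixed nonzero integers, and then bound the number of prime divisors of those integers. Observe that $A_p$ has order at most $\mathfrak{o}$ in $\GL_d(\Z/p\Z)$ if and only if there exists an integer $k$ with $1\le k\le \mathfrak{o}$ such that $A^k\equiv \Id \pmod{p}$, i.e., $p$ divides every entry of the integer matrix $A^k-\Id$. Writing $\mathcal{P}_k$ for the set of primes that divide every entry of $A^k-\Id$, the set I have to bound is $\bigcup_{k=1}^{\mathfrak{o}}\mathcal{P}_k$, and it is enough to bound each $|\mathcal{P}_k|$ and sum.

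Next, I would invoke the infinite-order hypothesis: for every $k\ge 1$, one has $A^k\neq \Id$, so the integer matrix $A^k-\Id$ has at least one nonzero entry $a_k\in\Z$. Every prime in $\mathcal{P}_k$ divides $a_k$, hence $|\mathcal{P}_k|\le \omega(|a_k|)\le \log_2 |a_k|$, where $\omega$ counts distinct prime divisors. A standard induction using $\|MM'\|_\infty \le d\,\|M\|_\infty\|M'\|_\infty$ gives $\|A^k\|_\infty\le (d\,\|A\|_\infty)^k$, so $|a_k|\le (d\,\|A\|_\infty)^k+1$, and therefore $\log_2 |a_k|=O(k)$, with an implicit constant depending only on the fixed matrix $A$ and the fixed dimension $d$.

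Summing then finishes the argument:
\[
\Big|\bigcup_{k=1}^{\mathfrak{o}}\mathcal{P}_k\Big|\;\le\;\sum_{k=1}^{\mathfrak{o}}|\mathcal{P}_k|\;\le\;\sum_{k=1}^{\mathfrak{o}} O(k)\;=\;O(\mathfrak{o}^2),
\]
which is the announced bound.

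I do not expect any serious obstacle; the argument is essentially a matching of exponential growth against logarithms. The one place where the hypothesis is truly used is in the existence of a nonzero entry $a_k$ for \emph{every} $k\le\mathfrak{o}$: if some $A^k$ were equal to $\Id$ over $\Z$, then $\mathcal{P}_k$ would be the set of all primes and the estimate would collapse. Infinite order is exactly what prevents this and is the crucial input to the proof.
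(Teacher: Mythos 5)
Your argument is correct and is essentially the paper's own proof: both rely on the inequality $\|A^k\|_\infty \le (d\,\|A\|_\infty)^k$, the observation that infinite order forces $A^k - \Id$ to have a nonzero entry (so the primes with $A_p^k = \Id$ all divide a fixed nonzero integer of size $2^{O(k)}$), and the bound that an integer has at most logarithmically many distinct prime factors. The only cosmetic difference is that the paper multiplies the entries $\|A^k-\Id\|_\infty$ into a single integer $N$ and bounds $\log N$, whereas you bound each $|\mathcal{P}_k|$ separately and sum, which is the same computation.
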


\begin{proof}
An immediate induction establishes that, for any $\mathfrak{n}\geq 1$, we have
$\|A^\mathfrak{n}\|_\infty \leq d^{\mathfrak{n}-1}\|A\|_\infty^\mathfrak{n}$. It follows that 
\[
\|A^\mathfrak{n}-\Id\|_\infty \leq d^{\mathfrak{n}-1}\|A\|_\infty^\mathfrak{n} + 1 \leq \left(d\ \|A\|_\infty\right)^\mathfrak{n}.
\]
Since $A$ has infinite order, $A^\mathfrak{n}\neq \Id$ and $\|A^\mathfrak{n}-\Id\|_\infty\neq 0$ for every $\mathfrak{n}\ge 1$.
Moreover, $A_p$ always has finite order in $\GL_d(\Z/p\Z)$. If $A_p$ is of order $\mathfrak{n}$, then $p$ divides $\|A^\mathfrak{n}-\Id\|_\infty$, since every coefficient of $A^\mathfrak{n}-\Id$ is $0$ modulo $p$.

Observe that if an integer $N$ has $x$ distinct prime factors, then $N \ge 2^x$, and hence $x \le \log N$. Let $N = \prod_{\mathfrak{n}=1}^\mathfrak{o}\|A^\mathfrak{n}-\Id\|_\infty$. If $p$ is a prime number such that $A_p$ has order at most $\mathfrak{o}$ in $\GL_d(\Z/p\Z)$, then $p$ must divide $N$. Thus the number of primes $p$ such that $A_p$ has order at most $\mathfrak{o}$ is at most
\[
\log N\leq \log \prod_{\mathfrak{n}=1}^\mathfrak{o} \|A^\mathfrak{n}-\Id\|_\infty
\leq \sum_{\mathfrak{n}=1}^\mathfrak{o} \mathfrak{n}\log \left(d\,\|A\|_\infty\right),
\]
which is $\O(\mathfrak{o}^2)$ when $d$ and $A$ are fixed.
\end{proof} 

\begin{corollary}\label{lemma: prime divisor of q(n)}
Let $A \in \GL_d(\Z)$ be a matrix of infinite order, and let $q(n)$ be the function defined in \cref{def: def of q}. For each $n$ large enough, $q(n)$ admits a prime factor $p_n$ such that $A_{p_n}$ has  order at least $2\,\log^2 n$ in $\GL_d(\Z/p_n\Z)$.
\end{corollary}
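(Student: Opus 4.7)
The plan is a direct counting argument combining \cref{lm: matrices of small order} with the prime number theorem, so the real content is a comparison of growth rates.

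First, I would apply \cref{lm: matrices of small order} with $\mathfrak{o} = 2\log^2 n$. Since $A$ has infinite order and $d$, $A$ are fixed, this yields a constant $C = C(A,d)$ such that the number of primes $p$ for which $A_p$ has order at most $2\log^2 n$ in $\GL_d(\Z/p\Z)$ is at most $C\,(2\log^2 n)^2 = 4C \log^4 n$. Call these the \emph{bad} primes for $n$.

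Next, I would count the prime factors of $q(n)$, which by \cref{def: def of q} are exactly the primes $p \le \log^5 n$. The prime number theorem gives
\[
\pi(\log^5 n) \enspace\sim\enspace \frac{\log^5 n}{\log(\log^5 n)} \enspace=\enspace \frac{\log^5 n}{5\log\log n},
\]
so in particular $\pi(\log^5 n) \ge \frac{\log^5 n}{6\log\log n}$ for all $n$ large enough.

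Finally, I compare the two bounds. The ratio between the number of prime factors of $q(n)$ and the bound on the number of bad primes is at least
\[
\frac{\log^5 n / (6\log\log n)}{4C\log^4 n} \enspace=\enspace \frac{\log n}{24\,C\,\log\log n},
\]
which tends to infinity with $n$. Hence, for $n$ large enough, $q(n)$ has strictly more prime factors than there are bad primes, so at least one prime factor $p_n$ of $q(n)$ is not bad, i.e.\ $A_{p_n}$ has order at least $2\log^2 n$ in $\GL_d(\Z/p_n\Z)$. There is no genuine obstacle here; the only thing to check carefully is that the ``gap'' factor $\log^5 n / \log^4 n = \log n$ indeed dominates the $\log\log n$ factor coming from the prime number theorem, which it plainly does.
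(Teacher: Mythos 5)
Your proposal is correct and follows the same route as the paper's proof: apply \cref{lm: matrices of small order} with $\mathfrak{o}=2\log^2 n$ to bound the number of ``bad'' primes by $\O(\log^4 n)$, use the prime number theorem to show $q(n)$ has $\Omega(\log^5 n/\log\log n)$ prime factors, and conclude by comparing growth rates. If anything, you are slightly more careful than the paper, which writes the bad-prime bound as ``$4\log^4 n$'' without explicitly carrying along the constant hidden in the $\O(\mathfrak{o}^2)$ of \cref{lm: matrices of small order}; you keep that constant $C$ explicit, which is the cleaner way to justify the final comparison.
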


\begin{proof}
According to the prime number theorem~\cite{hadamard1896distribution,de1896recherches}, which states that the number of prime numbers less than or equal to $N$ is asymptotically equal to $\frac{N}{\ln N}$, where $\ln$ denote the Napierian logarithm, there exists a positive constant $C$ such that, for $N$ large enough, this number is at least $C\, \frac{N}{\log N}$. Therefore $q(n)$ is a product of at least $C\,\frac {\log^5 n}{5\log \log n}$ different prime numbers.

Since this quantity is asymptotically greater than $4\,\log^4n$, \cref{lm: matrices of small order} establishes that $q(n)$ has a prime factor $p_n$ such that $|\langle A\rangle_{p_n}| > 2\,\log^2 n$.
\end{proof}

Connecting \cref{lemma: prime divisor of q(n)} with \cref{lm:convergence rate}, we get the following result.

\begin{corollary}\label{cor:convergence uniform lazy}
Let $q(n)$ be the map defined in \cref{def: def of q}, and let $(h_n)_n$ be a sequence such that $h_n \in \tH_{q(n)}$ for each $n$. 
If $H$ is infinite, then the probability that a length $n$ trajectory in $\tU_{q(n)}$ ends in $h_n$ is $\O(\log^{-2}n)$.
\end{corollary}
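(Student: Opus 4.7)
The strategy is to reduce the analysis of $\tU_{q(n)}$ to that of a much smaller quotient chain $\tU_{p_n}$ for a well-chosen prime $p_n \mid q(n)$. Since $H$ is infinite and finitely generated inside $\GL_d(\Z)$, a classical theorem of Schur (every finitely generated torsion subgroup of $\GL_d(\C)$ is finite) ensures that $H$ contains a matrix $A$ of infinite order. Applying \cref{lemma: prime divisor of q(n)} to $A$ produces a prime $p_n \mid q(n)$ with $|\langle A_{p_n}\rangle| > 2\log^2 n$; since only $O(\log^4 n)$ primes dividing $q(n)$ can fail this property and only finitely many primes are smaller than the constant $\max_{B\in\alph}\|B\|_\infty$, I may further assume, for $n$ large enough, that $p_n$ exceeds that threshold, which is precisely the hypothesis needed to apply \cref{lm:convergence rate} modulo $p_n$.

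The central step is to push the probability from $\tU_{q(n)}$ down to $\tU_{p_n}$ through the reduction-mod-$p_n$ map $\phi\colon H_{q(n)}\to H_{p_n}$. In both the aperiodic and the period-$2$ cases, $\tP_m^n(\Id,M)$ equals the probability that a uniform random word $w\in\alph^{2n}$ satisfies $\mat(w)_m = M$, so the implication $\mat(w)_{q(n)} = h_n \Rightarrow \mat(w)_{p_n} = \phi(h_n)$ yields the key inequality
\[
\tP^n_{q(n)}(\Id, h_n)\ \leq\ \tP^n_{p_n}\bigl(\Id,\phi(h_n)\bigr).
\]
I would then invoke \cref{lm:convergence rate} with $m = p_n$, together with the elementary observation that total variation distance dominates pointwise differences of probabilities, to upper-bound the right-hand side by $\tfrac{1}{|\tH_{p_n}|} + \tfrac{1}{2}\sqrt{|\tH_{p_n}|}\bigl(1 - \tfrac{1}{4k^2|\tH_{p_n}|^2}\bigr)^n$.

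To close, I would plug in the two-sided estimate $\log^2 n < |\tH_{p_n}| \leq p_n^{d^2} \leq (\log n)^{5d^2}$: the lower bound follows from $|\tH_{p_n}| \geq |\langle A_{p_n}\rangle|/2$ and the choice of $p_n$, while the upper bound follows from $p_n \leq \log^5 n$. The first term $1/|\tH_{p_n}|$ is immediately $O(\log^{-2} n)$. For the second, the polylogarithmic upper bound on $|\tH_{p_n}|$ forces $1/(4k^2|\tH_{p_n}|^2) \geq 1/\mathrm{polylog}(n)$, so the geometric factor is bounded by $\exp\bigl(-n/\mathrm{polylog}(n)\bigr)$, which decays faster than any negative power of $\log n$; the polylogarithmic prefactor $\sqrt{|\tH_{p_n}|}$ does not affect this conclusion.

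The main conceptual obstacle is that \cref{lm:convergence rate} cannot be applied directly to $\tU_{q(n)}$: its state space can have size $2^{\Theta(\log^6 n)}$, so the rate $1-1/(4k^2|\tH_{q(n)}|^2)$ is too close to $1$ for $n$ steps to produce any useful mixing bound. The whole point of defining $q(n)$ as the product of all small primes is precisely to guarantee, for every infinite-order matrix, a prime divisor $p_n$ whose quotient chain is simultaneously small enough for rapid mixing and large enough that $1/|\tH_{p_n}| = O(\log^{-2} n)$; identifying and exploiting such a $p_n$ is the substantive content of the argument.
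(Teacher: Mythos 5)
Your proposal follows essentially the same route as the paper: Schur's theorem produces an infinite-order element $A\in H$; \cref{lemma: prime divisor of q(n)} picks a prime $p_n \mid q(n)$ with $|\langle A_{p_n}\rangle| > 2\log^2 n$; the event $\mat(w)_{q(n)} = h_n$ is pushed down to $\mat(w)_{p_n} = \hat h_n$ via divisibility; \cref{lm:convergence rate} is applied in $\tU_{p_n}$; and the two-sided bound $\log^2 n \le |\tH_{p_n}| \le (\log n)^{5d^2}$ closes the estimate. Your variant argument for ensuring $p_n > \max_{B\in\alph}\|B\|_\infty$ --- combining the $\O(\log^4 n)$ count from \cref{lm: matrices of small order} with the fact that only finitely many primes sit below that constant threshold --- is valid and arguably cleaner than the paper's derivation of $p_n \ge (\log n)^{2/d^2}$ from the cardinality bounds.

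There is one small but real omission. Your chain of identities bounds $\tP^n_{q(n)}(\Id, h_n) = P^{2n}_{q(n)}(\Id, h_n)$, i.e.\ the probability that a uniform word of \emph{even} length $2n$ evaluates to $h_n$ modulo $q(n)$. The application to \cref{thm: technical theorem} requires a bound on $P^n_{q(n)}(\Id, \Id)$ for words of length $n$, which may be odd, and for odd $n$ the walk in $\U_{q(n)}$ started at $\Id$ is not a walk in $\tU_{q(n)}$. The paper handles this by writing $n = 2\nu+1$ and expanding
\[
P^{n}_{p_n}(\Id,\hat h_n) \;=\; \sum_{M\in\tH_{p_n}} \tP^{\nu}_{p_n}(\Id,M)\, P_{p_n}(M,\hat h_n),
\]
where $P_{p_n}(M,\hat h_n)$ is nonzero for at most $2k$ states $M$ and each such factor is at most $\tfrac1{2k}$, so the bound $\max_M \tP^{\nu}_{p_n}(\Id,M)=\O(\log^{-2}n)$ carries over. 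Adding this one extra step would complete your argument.
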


\begin{proof}
By a theorem due to Schur \cite{1911:Schur}, a finitely generated subgroup of $\GL_d(\C)$ where each element has finite order must be finite. As a result, since $H$ is infinite, it contains an element $A$ with infinite order. Then by \cref{lemma: prime divisor of q(n)}, for each $n$ large enough, $q(n)$ admits a prime factor $p_n$ such that $A_{p_n}$ is of order at least $2\,\log^2 n$ in $\GL_d(\Z/p_n\Z)$. 

It follows that $|\tH_{p_n}| \ge \frac12\,|H_{p_n}| \ge \frac12\,|\langle A_{p_n}\rangle| \ge \log^2 n$. Moreover, one has $|\tH_{p_n}| \le p_n^{d^2} \le (\log n)^{5d^2}$, since each prime divisor of $q(n)$ is at most equal to $\log^5n$.

It follows from these two inequalities that $\log^2 n \le |\tH_{p_n}| \le p_n^{d^2}$, and hence $p_n\geq (\log n)^{2/d^2}$. Thus, for $n$ sufficiently large, $p_n$ is greater than $2\,\max_{B\in \alph}\|B\|_\infty$, as required to apply \cref{lm:convergence rate}.

For each even integer $n=2\nu$, let $\hat h_n$ be the projection of $h_n$ modulo $p_n$, which is well defined since $p_n$ divides $q(n)$.
If $\hat h_n \notin \tH_{p_n}$, then $\tP_{p_n}^\nu(\Id,\hat h_n) = 0$. If $\hat h_n\in \tH_{p_n}$, by \cref{lm:convergence rate}, and with the notation of that statement, we have
\[
\left|\tP_{p_n}^\nu(\Id,\hat h_n) -  \frac1{|\tH_{p_n}|}\right| \le \left\| \tP_{p_n}^\nu(\Id,\cdot) - \pi \right\|_{\text{Var}}
\leq \frac12\ \sqrt{|\tH_{p_n}|}\ \left(1-\frac1{4k^2\,|\tH_{p_n}|^2}\right)^{\nu}.
\]

Since $\log^2n\leq |\tH_{p_n}|\leq (\log n)^{5d_2}$, we have
\begin{align*}
\tP_{p_n}^\nu(\Id,\hat h_n) &\le \frac1{|\tH_{p_n}|} + \frac12\ \sqrt{|\tH_{p_n}|}\  \left(1-\frac1{4k^2\,|\tH_{p_n}|^2}\right)^{\nu} \\
&\le \frac{1}{\log^{2}n} + \frac12\ \sqrt{(\log n)^{5d^2}}\ \exp\left(-\frac{n}{8k^2(\log n)^{10d^2}}\right).
\end{align*}
Therefore, $P_{p_n}^n(\Id,\hat h_n) = \tP_{p_n}^\nu(\Id,\hat h_n)$ is $\O(\log^{-2}n)$ if $n$ is even and $\hat h_n\in\tH_{p_n}$. This also holds if
$\hat h_n\notin \tH_{p_n}$, as the corresponding probability is equal to zero.

Now suppose that $n$ is odd, $n=2\nu+1$.  In that case, $P_{p_n}^n(\Id,\hat h_n) = \left(\tP_{p_n}^\nu\times P_{p_n}\right)(\Id,\hat h_n)$. This is equal to $\sum_{k_n \in \hat H_{p_n}}\tP_{p_n}^\nu(\Id, k_n)\, P_{p_n}(k_n,\hat h_n)$. Observe that $P_{p_n}(k_n,\hat h_n) = 0$ unless $k_n = \hat h_n\, A$ for some $A\in \alph$ (which holds for exactly $2k$ values of $k_n$), in which case $P_{p_n}(k_n,\hat h_n) \le \frac 1{2k}$. It follows that, in this case as well, $P_{p_n}^n(\Id,\hat h_n)$ is $\O(\log^{-2}n)$.

Finally, we note that, if $A\in\GL_d(\Z)$ is such that $A_{q(n)}=\Id$, then $A_{p_n}=\Id$ since $p_n$ is a divisor of $q(n)$.
Thus, the probability that a trajectory in $\tU_{q(n)}$ ends in $h_n$ is bounded above by the probability that a trajectory following the same steps in $\tU_{p_n}$ ends in $\hat h_n$, thus concluding the proof.
\end{proof}

\cref{cor:convergence uniform lazy} directly implies the proof of 
\cref{thm: technical theorem} by taking each $h_n$ to be $\Id$.
 
\begin{remark}
    It is interesting to note that, in the proof of \cref{cor:convergence uniform lazy}, we are not concerned with the value of the matrix $A$ or the prime $p_n$, nor with how hard it would be to compute them. It is enough, for our purpose, to know that they exist.
\end{remark}

\section{Concluding remarks}\label{sec: concluding}

\subsection{A methodological comment}

In Algorithm $\S$, we compute $\mat(w)$ modulo $q(n)$, the product of all the prime numbers less than or equal to $\log^5n$. The linear average-case complexity is established by the existence of a prime divisor $p_n$ of $q(n)$ such that $H_{p_n} = \langle \Sigma\rangle_{p_n}$ has cardinality at least $\O(\log^2n)$.

Another approach is the following probabilistic algorithm: draw uniformly at random a prime number $p \le \log^7n$ and then modify Algorithm $\S$ to compute $\mat(w)_p$ (instead of $\mat(w)_{q(n)})$.

As in Section~\ref{sec: proof of technnical theorem}, Lemma~\ref{lm: matrices of small order} shows that $\O(\log^4n)$ prime numbers $p'$ are such that $|H_{p'}| \le \log^2n$, so $|H_{p}| \le \log^2n$ with probability $\O\left(\frac{\log^4n}{\frac{\log^7n}{\log\log n}}\right) = \O(\log^{-2}n)$. In particular, this modification of Algorithm $\S$ also works, but a full analysis of its complexity requires evaluating the time needed to draw the prime number $p \le \log^7n$.

One solution is to use Eratosthenes's sieve, as in the proof of Proposition~\ref{prop: t polylogarithmic}, to compute all the primes at most $\log^7n$, and then draw one of those primes uniformly at random. The listing can be done in time $\O(\log n\,\log\log n\, \log\log\log n)$. Another is to use a reject algorithm to draw a number at most equal to $\log^7n$ and test its primality (in polylog time). The expected number of iterations (rejects) is $\O(\log\log n)$.

With either strategy, we get a probabilistic algorithm with the same average-case complexity as our deterministic algorithm, namely $\O(n)$. Therefore, there is no reason to prefer such an algorithm.

\subsection{Generalization: from $\Z$ to certain subrings of $\C$}\label{sec: generalization}

Let $R$ be a subring of the complex field $\C$, such that $(R,+)$ is a finitely generated group (and hence a free abelian group). In order to compute in $R$, we assume that we are given a basis $(\alpha_1,\dots,\alpha_\delta)$ of $(R,+)$, as well as the expression of the products $\alpha_i\alpha_j$ in that basis. The group $(R,+)$ is isomorphic to $\Z\alpha_1 \oplus \dots \oplus \Z\alpha_\delta$, and its elements are identified with elements of $\Z^\delta$.

The Word Problem is formulated in $\GL_d(R)$ as in $\GL_d(\Z)$.
A fixed finite subset $\Sigma$ of $\GL_d(R)$ is given, along with the inverses $A\inv$ of the matrices $A\in \Sigma$, and $\alph$ denotes the set $\Sigma \cup \Sigma\inv$. Given a word $w\in \alph^*$, one must decide whether the evaluation $\mat(w)$ of $w$ in $\GL_d(R)$ is the identity matrix.

If $m\in \Z$, $m \ge 2$, the set $mR$ is an ideal of $R$. If $A\in R$ or $A \in \GL_d(R)$, we denote by $A_m$ the image of $A$ in $R_m = R/mR$, which is isomorphic to $\bigoplus_i (\Z/m\Z)\, \alpha_i$.

One can then consider the same algorithms $\DC_\Sigma$ and $\S$ (for the same function $q(n)$ defined in \cref{sec: better algorithm}) as for matrices over $\Z$, computing in $R_{q(n)}$ instead of $\Z/q(n)\Z$. The following statement, analogous to \cref{{prop: complexity DC},thm: main theorem}, holds.

\begin{theorem}\label{thm: main for integer rings}
    Let $R$ be a subring of $\C$ such that the additive group $(R,+)$ is finitely generated. The Word Problem $\WP_\Sigma$ in $\GL_d(R)$ has worst-case complexity $\O(n\log^2n)$ if $\langle\Sigma\rangle$ is infinite, and $\O(n)$ if $\langle\Sigma\rangle$ is finite, or if $\Sigma$ consists of upper triangular matrices.
    
    Moreover Algorithm $\S$ solves $\WP_\Sigma$ in $\GL_d(R)$ with linear average-case complexity.
\end{theorem}

\begin{proof}
The proof is essentially the same as for matrices over $\Z$. The following is a discussion of the necessary adjustments of the reasoning.

If $x = \sum_{i=1}^\delta x_i\alpha_i$ is an element of $R$ (where each $x_i$ is in $\Z$), we let $\|x\|_\infty = \max_i|x_i|$. If $A\in \GL_d(R)$, we let $\|A\|_\infty = \max\{\|a_{i,j}\|_\infty \mid 1\le i,j\le d\}$. Finally, we let $\beta = \max\{\|\alpha_i\alpha_j\|_\infty \mid 1\le i,j \le \delta\}$. We also let $\ell(x) = \ell(\|x\|_\infty)$ and $\ell(A) = \ell(\|A\|_\infty)$.
%
The following facts are verified using \cref{prop: basic facts}.
\begin{itemize}
    \item If $x_1,\dots, x_k \in R$ with each $\ell(x_i) \le L$, then $\sum_i x_i$ is computed in time $\O(L)$ and $\|\sum_ix_i\|_\infty \le k \max(\|x_i\|_\infty)$.

    \item If $x, y \in R$ and $\ell(x), \ell(y) \le L$, then $xy$ is computed in time $\O(L\log L)$ and $\|xy\|_\infty \le \beta\, \|x\|_\infty \|y\|_\infty$.

    \item If $A, B \in \GL_d(R)$ and $\ell(A), \ell(B) \le L$, then $\|AB\|_\infty \le d\beta\, \|A\|_\infty \|B\|_\infty$, and it is computed in time $\O(L\log L)$.

    \item Suppose now that $y \in R$ is a constant and $x\in R$. Then $\|xy\|_\infty = \O(\|x\|_\infty)$, and $xy$ is computed in time $\O(\ell(x))$. If $B$ is a constant matrix and $A \in \GL_d(R)$, the size of the coefficients of $AB$ is $\O(\|A\|_\infty)$, and $AB$ is computed in time $\O(\ell(A))$.
\end{itemize}

The facts listed above, along with \cref{rk: worst case in other rings}, show that the worst case complexity of $\DC_\Sigma$ (on $\GL_d(R)$) is $\O(n\log^2n)$ if $H$ is infinite, and $\O(n)$ if $H$ is finite. The reasoning in \cref{prop: upper triangular} also applies in this case: if $\Sigma$ consists only of upper triangular matrices, then the worst case complexity of $\DC_\Sigma$ is $\O(n)$.

As for matrices over $\Z$, \cref{lemma: ac of DCm} applies and it reduces the proof to proving an analogue of \cref{thm: technical theorem}: verifying that, if $H = \langle\Sigma\rangle$ is infinite, then $\mat(w)_{q(n)} = \Id$ with probability $\O(\log^{-2} n)$ (when $w$ is chosen uniformly at random among length $n$ words).

A minor remark concerns the proof of \cref{lm: matrices of small order} when $\Z$ is replaced by $R$. We note that, if $A$ is a matrix of infinite order with coefficients in $R$, then $\|A^n\|_\infty \le (d\beta)^{n-1}\|A\|_\infty^n$ (instead of $d^{n-1}\|A\|_\infty^n$). This is no obstacle in the rest of the proof of that lemma.

Similarly, the proof of \cref{cor:convergence uniform lazy} carries as in the case of coefficients in $\Z$, since Schur's theorem applies to all finitely generated subgroups of $\GL_d(\C)$, such as $H = \langle\Sigma\rangle$.
\end{proof}

\subsubsection*{Acknowledgments}
The authors gratefully acknowledge conversations with Denis Osin, Aditya Karnataki and Patrick Polo, which helped shape \cref{sec: generalization}.


\bibliographystyle{alphaurl}
\newcommand{\etalchar}[1]{$^{#1}$}

\end{document}